\documentclass[11pt]{article}
\textwidth 15.9cm
\textheight 23. cm
\addtolength{\oddsidemargin}{-11.5mm}
\addtolength{\evensidemargin}{-11.5mm}
\addtolength{\topmargin}{-19.0mm}

\usepackage{ulem}
\usepackage{subcaption}
\usepackage{mathtools}
\usepackage[colorlinks=true, linkcolor=blue, citecolor=red]{hyperref}
\usepackage[utf8]{inputenc}
\usepackage[bbgreekl]{mathbbol}
\usepackage{amsmath,amssymb}
\usepackage{amsthm}
\usepackage{graphicx}
\usepackage{url}
\usepackage[numbers,square]{natbib}
\usepackage{multirow}
\usepackage{tikz}
\usepackage{tikz-cd}
\usetikzlibrary{arrows,matrix}
\usepackage{bbm}
\usepackage{authblk}

\usepackage{bm}  
\usepackage{comment}
\usepackage{enumerate}

\usetikzlibrary{matrix}

\usepackage{stmaryrd}

\usetikzlibrary{calc,patterns,shapes.geometric}
\usetikzlibrary{arrows,shapes,positioning}
\usetikzlibrary{decorations.markings,decorations.pathmorphing,decorations.pathreplacing}
\usetikzlibrary{decorations.text,decorations.shapes}


\include{hvp_macros}

\newtheorem{theorem}{Theorem}[section]
\newtheorem{lemma}{Lemma}[section]
\newtheorem{proposition}[lemma]{Proposition}

\newtheorem{remark}[lemma]{Remark}

\newtheorem{definition}[lemma]{Definition}
\newtheorem{assumption}[lemma]{Assumption}

\usepackage{import}
\newcommand{%
    \def\svgwidth{\columnwidth}
    \import{./inkscape_figs/}{.pdf_tex}
}[2]{%
    \def\svgwidth{#2\columnwidth}
    \import{./inkscape_figs/}{#1.pdf_tex}
}





\title{Harmonic potentials in the de Rham complex}

\author[1]{Martin Campos Pinto}
\author[1,2]{Julian Owezarek}
\affil[1]{Max-Planck-Institut für Plasmaphysik, Garching, Germany}
\affil[2]{Technische Universität München, Garching, Germany}

\begin{document}

\maketitle

\begin{abstract} 
    Representing vector fields by potentials can be a challenging task in domains with cavities or tunnels, due to the presence of harmonic fields which are both irrotational and solenoidal but may have no scalar or vector potentials.
    For harmonic fields normal to the boundary, which exist in domains with cavities, the standard approach is to construct scalar potentials by solving Laplace's equation with Dirichlet boundary conditions fitted to the closed surfaces surrounding the domain's cavities.
    For harmonic fields tangent to the boundary, which exist in domains with tunnels, a similar method was lacking. In this article we present a construction of vector potentials obtained by solving curl-curl problems with inhomogeneous tangent boundary conditions fitted to closed curves looping around the tunnels. Just as the cavity surfaces represent a basis for the 2-chain homology group, these tunnel curves represent a basis for the 1-chain homology group and the corresponding vector potentials yield a basis for the tangent harmonic fields. 
    In our analysis the linear independence of the harmonic fields is established by considering their fluxes through a collection of reciprocal surfaces.
    These surfaces, whose boundaries lie on the boundary of the domain and which are in intersection duality with the tunnel curves, represent a basis for the relative 2-chain homology group modulo the boundary: their existence in general domains follows from the Poincaré-Lefschetz duality.
    Applied to structure-preserving finite elements, our method also provides an exact geometric parametrization of the discrete harmonic fields in terms of discrete potentials.
\end{abstract}

\tableofcontents


\section{Introduction} \label{sec:intro}

Using scalar or vector potentials to represent vector fields is a powerful technique in physics and engineering.
In practical computations they ensure that a vector field is either irrotational (curl-free) or solenoidal (divergence-free), which has useful applications in many branches such as astrophysics \cite{silberman_numerical_2019,sarafidou_global_2025}, magnetostatics~\cite{biro_magnetostatic_1996,rapetti_discrete_2003,lipnikov_mimetic_2011} or fluid dynamics~\cite{nitikitpaiboon_ale_1993}, and they are also involved in multigrid solvers \cite{arnold_preconditioning_1997,hiptmair_multilevel_1999}.
Beyond numerical applications, potentials are fundamental to several variational formulations, for instance in kinetic or fluid electromagnetic models~\cite{low_lagrangian_1958, newcomb_lagrangian_1962} and they play a crucial role in quantum~\cite{feynman_quantum_1963} or relativistic physics \cite{rindler_introduction_1991}: a recent illustration is the elegant reformulation of the relativistic Vlasov-Maxwell equations as a wave-transport system involving Liénard-Wiechert potentials~\cite{bouchut_classical_2003,Bouchut2004}. Finally, their geometric properties expressed in Stokes' theorem are linked to key topological aspects of physical laws, particularly in electromagnetism~\cite{bossavit_computational_1998,gross_electromagnetic_2004}.

A difficulty arises when considering domains with cavities or tunnels, where harmonic fields with vanishing traces may exist~\cite{picard_boundary_1982, arnold_feec_2018}. Harmonic fields are unique because they are both irrotational and solenoidal, yet they may not admit scalar or vector potentials depending on their boundary conditions. Specifically, domains with \textit{cavities} have harmonic fields normal to the boundary that admit scalar potentials but no vector potentials, while domains with \textit{tunnels} have harmonic fields tangent to the boundary that admit vector potentials but no scalar potentials~\cite{cantarella_vector_2002}.

These cavities and tunnels correspond to the 2-chain and 1-chain homology groups of the domain, whose ranks (the Betti numbers) coincide with the dimensions of the spaces of normal and tangent harmonic vector fields, respectively~\cite{lee_ISM_2012,arnold_feec_2018}: in particular these dimensions are topological invariants, which means that they remain unchanged under homeomorphisms. 
For normal harmonic fields, a canonical construction has been proposed where scalar potentials are characterized by Laplace problems with boundary conditions fitted to the domain's cavities~\cite{picard_boundary_1982}. For tangent harmonic fields, a similar construction is more complex:
in domains diffeomorphic to a solid torus, a common approach is to use the toroidal angle as a (piecewise smooth) scalar potential~\cite{merkel_integral_1986,henneberg_combined_2021}, and this approach has been extended to domains that can be made simply connected by removing a finite number of disjoint \textit{cutting surfaces} \cite{foias1978remarques,picard_boundary_1982,ABDG_1998}. Such an assumption, however, imposes
topological constraints which exclude simple domains such as hollow tori 
\cite{benedetti_topology_2012}.
Moreover, a construction of proper vector potentials seemed to be lacking for tangent harmonic fields.

\bigskip
In this article we propose such a construction. Our vector potentials are characterized by homogeneous curl-curl problems with tangent boundary conditions fitted to closed curves that loop around the tunnels of the domain. These \textit{tunnel curves} form a basis family for the 1-chain homology group, and the associated vector potentials yield a basis for the space of tangent harmonic vector fields. To establish the linear independence of the  resulting harmonic fields, our analysis relies on surfaces which are \textit{reciprocal} to the tunnel curves. These surfaces are reminiscent of the cutting surfaces involved in the construction of piecewise smooth scalar potentials, however they are not involved in the construction step and for this reason they do not need to be disjoint or render the complementary domain simply connected.
Instead, their main properties are that their own boundaries lie on the boundaries of the domain, and that they realize an intersection duality 
pairing with the tunnel curves. The fact that such a construction 
is possible for general domains is a consequence of the classical Poincaré-Lefschetz duality \cite{BGF_notes_1957,friedl_topology_2024}.

\bigskip
At the discrete level, we note that several authors have developed tree-based approaches as an alternative to cutting surfaces, which have been used to compute bases of divergence-free FEM spaces in \cite{hecht_construction_1981,AR_basis_2024}, vector potentials of (non harmonic) prescribed vector fields in \cite{rapetti_discrete_2003}, 
and discrete \textit{loop fields} in \cite{AR_construction_2013}, which may be used to generate a basis for
tangent harmonic fields.

An interesting feature of our approach is that it can also be applied to compute discrete harmonic vector potentials in structure-preserving finite element spaces. Under some assumptions which are fulfilled by Whitney~\cite{whitney_git_1957,bossavit_whitney_1988} or higher order Nédélec elements~\cite{nedelec_mixed_1980,nedelec_new_1986}, and more generally by the discrete de Rham sequences constructed by Hiptmair~\cite{hiptmair_canonical_1999} and Arnold, Falk and Winther~\cite{AFW_2006,AFW_2010} in the Finite Element Exterior Calculus (FEEC) framework, a natural discretization of our construction leads to an exact geometric parametrization of the discrete harmonic fields. 

\bigskip 
The article is organized as follows. In Section~\ref{sec:homharm} we begin by reminding the definition of the homology groups, together with that of the harmonic spaces in the $L^2$ de Rham complex, and we discuss some important properties
regarding their dimensions and potentials.
In Section~\ref{sec:construct} we then discuss our main assumption and describe our construction, which is based on a decomposition of the vector potentials in two parts: a smooth part that lifts a specific boundary condition fitted to the tunnel curves, and a correction part that guarantees that the resulting potential solves an homogeneous curl-curl problem. Section~\ref{sec:discpot} is finally devoted to studying a discrete version of our construction. After listing some properties that several structure-preserving finite element frameworks possess, and discussing the case of the discrete normal harmonic fields, we show that a natural application of our approach yields a basis of the discrete tangent harmonic fields, in terms of discrete vector potentials associated to the tunnels of the domain.

\bigskip 
In our work, an important source of inspiration has been the geometric approach promoted since the 1980s by Alain Bossavit for the development of structure-preserving methods in computational electromagnetism \cite{bossavit_whitney_1988,bossavit_computational_1998,rapetti_discrete_2003}. 
For instance, a construction of discrete vector potentials for magnetic fields satisfying specific flux conditions is described in \cite{bossavit_computational_1998}, within the framework of Whitney elements. We learned of Alain Bossavit's passing as we were writing this article, which we dedicate to his memory.

\section{Homology groups and harmonic function spaces} 
\label{sec:homharm}

Throughout the article, $\Omega$ is an open bounded domain in $\RR^3$ with Lipschitz boundary, in the usual sense that $\partial \Omega$ is locally the graph of a Lipschitz mapping with $\Omega$ on one side only.
For simplicity, we assume that $\Omega$ is connected.

\subsection{Harmonic fields in the Hilbert de Rham complex}

Let us start by reminding how the Hilbert spaces of harmonic functions 
are defined in $\Omega$: one first considers the Hilbert de Rham complex~\cite{bossavit_computational_1998,AFW_2010}
\begin{equation} \label{dR}
  0 
  \xrightarrow{~~} H^1(\Omega) 
  \xrightarrow{~ \bgrad ~} H(\bcurl;\Omega) 
  \xrightarrow{~ \bcurl ~} H(\Div; \Omega) 
  \xrightarrow{~ \Div ~} L^2(\Omega)
  \xrightarrow{~~} 0 
\end{equation}
for which the dual complex, made of the $L^2$ adjoint operators 
and their respective domains, is
\begin{equation} \label{dR_0}
  0 
  \xleftarrow{~~} L^2(\Omega)
  \xleftarrow{~ -\Div ~} H_0(\Div; \Omega) 
  \xleftarrow{~ \bcurl ~} H_0(\bcurl;\Omega) 
  \xleftarrow{~ -\bgrad ~} H^1_0(\Omega) 
  \xleftarrow{~~} 0.
\end{equation}
Following~\cite{AFW_2006}, we rewrite the primal complex 
using a generic notation
\begin{equation} \label{dR_V}
  0 
  \xrightarrow{~~} V^0
  \xrightarrow{~ d^0 ~} V^1
  \xrightarrow{~ d^1 ~} V^2
  \xrightarrow{~ d^2 ~} V^3
  \xrightarrow{~~} 0 
\end{equation}
and we denote the kernel and range spaces by
$$
\frZ^k := \{v \in V^k : d^k v = 0\}, 
\qquad \frB^k := d^{k-1} V^{k-1}.
$$
The dual complex is denoted as 
\begin{equation} \label{dR_0V}
  0 
  \xleftarrow{~~} V^*_0
  \xleftarrow{~ d^*_1 ~} V^*_1
  \xleftarrow{~ d^*_2 ~} V^*_2
  \xleftarrow{~ d^*_3 ~} V^*_3
  \xleftarrow{~~} 0
\end{equation}
with similar notation for the adjoint kernel and range spaces,
i.e., $\frZ^*_k := \{v \in V^*_k : d^*_k v = 0\}$ and 
$\frB^*_k := d^*_{k+1} V^*_{k+1}$.
The harmonic spaces are then formally defined as 
\begin{equation} \label{frH}
    \frH^k := \frZ^k \cap (\frB^k)^\perp 
        = \frZ^k \cap \frZ^*_k
\end{equation}
which yields for the Hilbert de Rham complex above:
\begin{equation}
  \label{frH_expl}
  \left\{ \begin{aligned}
    &\frH^0 = \RR, 
    \\
    &\frH^1 = H(\bcurl 0; \Omega) \cap H_0(\Div 0; \Omega),
    \\
    &\frH^2 = H(\Div 0; \Omega) \cap H_0(\bcurl 0; \Omega),
    \\
    &\frH^3 = \{0\}.
  \end{aligned} \right.
\end{equation}
Here, we have used standard concise notation for the kernel spaces
of the curl and divergence operators with or without boundary conditions, see e.g.~\cite{ACL_2018}. 
In \eqref{frH_expl} we see two spaces of harmonic vector fields: $\frH^1$
consists of fields which are \textit{tangent} to the boundary (as their normal trace vanishes), whereas $\frH^2$ consists of fields which are \textit{normal} to it.
If the domain is contractible (i.e., simply connected with a connected boundary), the above spaces of harmonic vector fields must vanish as a consequence of the Poincaré lemma~\cite{bossavit_computational_1998}.
In a general domain however this is not the case.

\subsection{Singular chains and homology groups}

The works of de Rham~\cite{de_rham_varietes_1955}
have established profound connections between harmonic spaces 
and homology groups. Here we follow~\cite{friedl_topology_2024} and consider singular homology groups in the compact domain $\overline{\Omega}$ (a topological manifold). We remind that a singular $k$-simplex (with $0 \le k \le 3$ here) is a continuous map $\sigma: \Delta_k \to \overline{\Omega}$ defined on the standard $k$-simplex $\Delta_k$, and that a (singular) $k$-chain is a formal linear combination of such singular simplices, with coefficients in $\ZZ$. Denoting by $\cC_k(\overline{\Omega})$ the corresponding group of $k$-chains, and by $\partial: \cC_k(\overline{\Omega}) \to \cC_{k-1}(\overline{\Omega})$ the (singular) boundary operator, we obtain a chain complex by considering the sequence
\begin{equation} \label{sing_chain}
  0
  \xleftarrow{~~} \cC_0(\overline{\Omega})
  \xleftarrow{~ \partial ~} \cC_1(\overline{\Omega})
  \xleftarrow{~ \partial ~} \cC_2(\overline{\Omega})
  \xleftarrow{~ \partial ~} \cC_3(\overline{\Omega})
  \xleftarrow{~~} 0.
\end{equation}
Here, the order from right to left corresponds to the orientation of the dual complex \eqref{dR_0}. Denoting by $\cZ_k(\overline{\Omega})$ and $\cB_k(\overline{\Omega})$
the $k$-cycles and $k$-boundaries, which are
the kernels and ranges (in $\cC_k(\overline{\Omega})$) of the boundary operator,
and observing that the inclusion $\cB_k(\overline{\Omega}) \subseteq \cZ_k(\overline{\Omega})$ follows from the sequence property $\partial \circ \partial = 0$, one next defines the $k$-th homology group as the quotient
\begin{equation}
    \label{hom}
    \cH_k(\overline{\Omega}) := \cZ_k(\overline{\Omega}) \quotient \cB_k(\overline{\Omega}).
\end{equation}
Thus, an element of $\cH_k(\overline{\Omega})$ is a class 
$[\sigma] = \{\sigma + \partial \tau : \tau \in \cC_{k+1}(\overline{\Omega})\}$ 
of closed $k$-chains $\sigma$ (closed in a manifold sense, not a topological one),
i.e. cycles, modulo boundary chains. Intuitively, this corresponds to the set of points (closed by convention), closed curves or closed surfaces that can be obtained from each other by continuous deformations within $\overline{\Omega}$. Note that in $\RR^3$ no bounded volume can be closed in a manifold sense. 
An illustration is provided in Figure~\ref{fig:domain}.

\begin{figure}[!h]
    \begin{center}
    \vspace{-3cm}
    \def\svgwidth{400pt}
    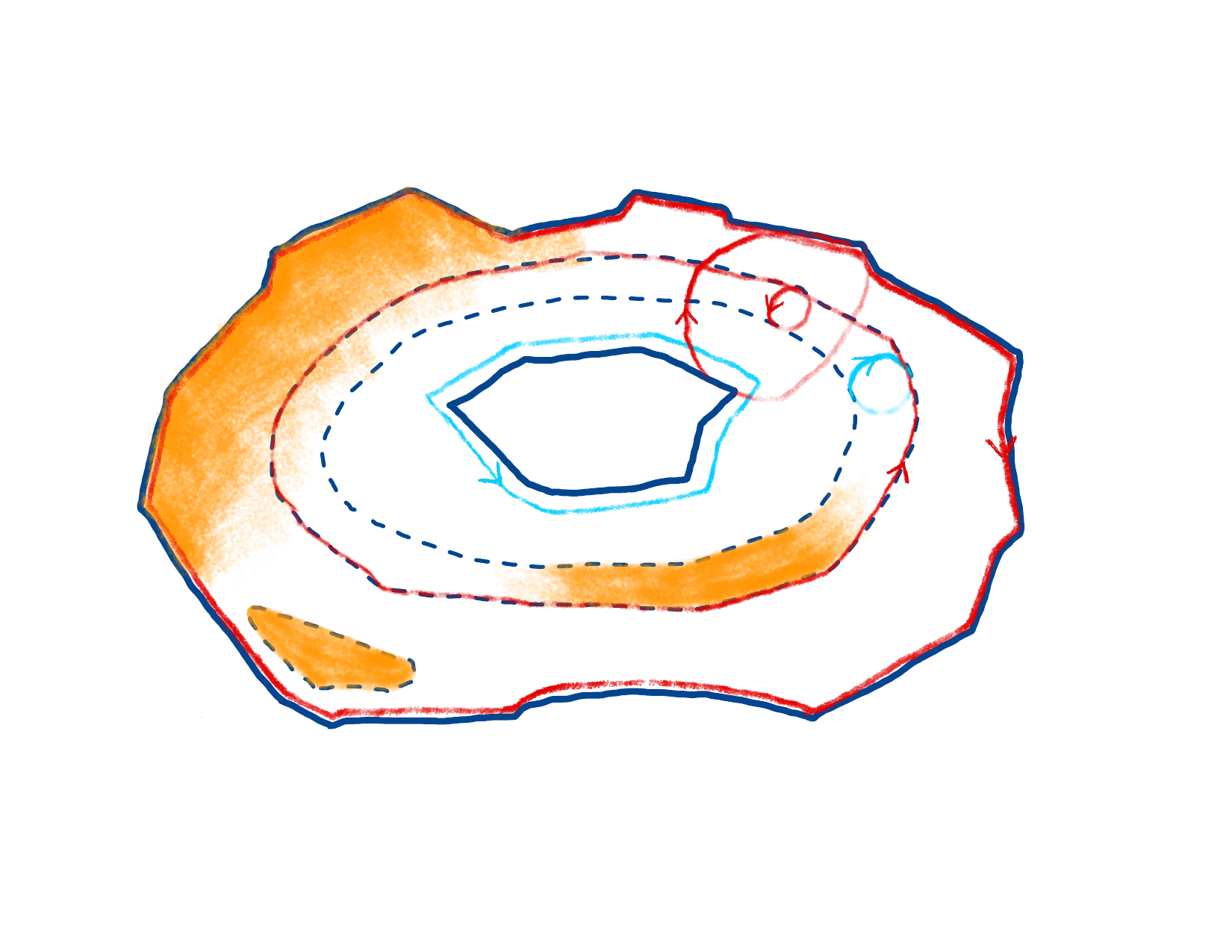
    \end{center}
\caption{%
    Example of a non-contractible domain $\Omega$
    with two cavities ($\beta_2 = 2$) and two tunnels ($\beta_1 = 2$): 
    one that goes through the domain, and another one 
    that is also a cavity and circles around the first tunnel. 
    The domain boundary has three connected components which 
    are closed surfaces (in orange, only partially drawn for the sake of visibility): 
    $S_0$ is the ``outer'' boundary which bounds the unbounded component of $\RR^3 \setminus \Omega$, 
    $S_1$ bounds the contractible cavity,
    and $S_2$ bounds the inner tunnel.  
    The last two represent independent classes of the 
    2-chain homology group $\cH_2(\overline{\Omega})$, while
    the first one is homologous to $-(S_1+S_2)$.
    Finally, the closed curves $\Gamma_i \subset \partial \Omega$, $i = 1, 2$, (in light blue) represent two independent classes of the 1-chain homology group $\cH_1(\overline{\Omega})$. They admit reciprocal surfaces $\Sigma_i$ as described 
    in Assumption~\ref{as:Gamma}, visualized by their boundary curves 
    $\tilde{\Gamma}_{i,a} \subset \partial \Omega$, drawn in red.%
}
\label{fig:domain}
\end{figure}

One can verify that $\cH_k(\overline{\Omega})$ is a free abelian group, akin to a vector space with integer coefficients.
Its rank is called the $k$-th \textit{Betti number} of $\Omega$, and denoted
\begin{equation}
    \label{betti}
    \beta_k := \rank(\cH_k(\overline{\Omega})).
\end{equation}

\begin{remark} \label{rem:cHOmega}
    One may also consider singular chains in the open domain $\Omega$,
    and the corresponding homology groups $\cH_k(\Omega)$.
    By considering a small perturbation that moves every point of $\overline{\Omega}$ towards the interior of $\Omega$,
    it is not difficult to see that the two homology groups have the same ranks.
\end{remark}
    
The extremal numbers are simple:
we always have $\beta_3 = 0$ since there are no closed volumes (three-dimensional cycles) in $\Omega$, and $\beta_0$ is the number of connected components, hence $\beta_0 = 1$ for a connected domain.
The intermediate Betti numbers are more interesting and characterize the lack of contractibility of $\Omega$.
In particular, $\beta_2$ is the number of cavities in the domain, i.e., the number of connected components of $\RR^3 \setminus \Omega$ which are bounded, and $\beta_1$ is the number of tunnels. 
Notice that tunnels may go through $\Omega$ as in handles, or be entirely surrounded by it, which makes them cavities as well: an example of both types is shown in Figure~\ref{fig:domain}.

In the sequel we will denote by $S_i$, $i = 0, \dots \beta_2$,
the connected components of the boundary $\partial \Omega$
(oriented by the normal vector pointing outside of $\Omega$).
They are all closed surfaces: we let $S_0$ be the 
boundary of the single unbounded connected component of 
$\RR^3 \setminus \Omega$, so that for every $i = 1, \dots \beta_2$,
$S_i$ bounds a single cavity surrounded by the domain $\Omega$:
these surfaces form a basis of the 2-chain homology group $\cH_2$. Observe that $S_0$ is homologous to $-(\sum_{i=1}^{\beta_2} S_i)$ since the difference bounds the domain itself.

Similarly, we will denote by $\Gamma_i$, $i = 1, \dots \beta_1$,
a family of oriented closed curves that generate the 1-chain homology group $\cH_1$. These curves will be chosen on the boundary $\partial \Omega$, such that each one circles once around a different tunnel: a precise description will be discussed in Section~\ref{sec:assum} below.

\subsection{The de Rham theorem for the $L^2$ de Rham complex}

An important result~\cite{AFW_2006,arnold_feec_2018}, which essentially follows from de Rham's theorem~\cite{de_rham_analysis_situs_1931,cartan_algebraic_1948,lee_ISM_2012}, is that the dimensions of the harmonic spaces \eqref{frH} coincide with the Betti numbers of the domain.
\begin{theorem} \label{th:dRL2}
For $k = 0, \dots 3$, the following relation holds:
\begin{equation}
  \label{h_b}
  \dim(\frH^k) = \beta_k.
\end{equation}
\end{theorem}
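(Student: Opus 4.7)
The plan is to connect the analytic harmonic spaces to the topological Betti numbers through two intermediate cohomology theories: the cohomology of the $L^2$ Hilbert complex itself, and the classical smooth de Rham cohomology of $\Omega$.

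I would begin by identifying $\frH^k$ with the Hilbert-complex cohomology $\frZ^k/\frB^k$. Since $\frH^k = \frZ^k \cap (\frB^k)^\perp$ and $\frB^k \subseteq \frZ^k$, this identification is immediate provided $\frB^k$ is closed in $V^k$: in that case one has the orthogonal splitting $\frZ^k = \frB^k \oplus \frH^k$, hence $\dim \frH^k = \dim(\frZ^k/\frB^k)$. Closedness of the ranges $\frB^k$ in the $L^2$ de Rham complex on a bounded Lipschitz domain follows from the compact embeddings of the mixed domain spaces $V^k \cap V^*_k$ into $L^2$, a classical Picard--Weck type result, which makes \eqref{dR} a closed Hilbert complex in the sense of~\cite{AFW_2010}.

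Next, I would show that the Hilbert-complex cohomology coincides with the smooth de Rham cohomology $H^k_{dR}(\Omega)$. This relies on density of smooth fields in each $V^k$ together with elliptic regularity of the associated Hodge Laplacian: every $L^2$ cohomology class admits a smooth representative, and any smooth form that is an $L^2$ coboundary is already an exact smooth form, up to another exact smooth form. The rigorous statement on Lipschitz domains is part of the framework developed in~\cite{AFW_2006,arnold_feec_2018}.

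Finally, I would invoke the classical de Rham theorem, $H^k_{dR}(\Omega) \cong H^k(\Omega;\RR)$, valid because a bounded Lipschitz domain in $\RR^3$ has the homotopy type of a finite CW complex, and then the universal coefficient theorem with field coefficients, which yields $H^k(\Omega;\RR) \cong \mathrm{Hom}(\cH_k,\RR)$ (the Ext term vanishing since $\RR$ is injective as a $\ZZ$-module), whose real dimension equals the rank of $\cH_k$, namely $\beta_k$. The extreme cases $k=0,3$ are direct: $\frH^0 = \RR$ since $\Omega$ is connected, and $\frH^3 = \{0\}$ by duality with the adjoint complex in dimension three, matching $\beta_0 = 1$ and $\beta_3 = 0$.

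The main obstacle I anticipate is the middle step, the rigorous transfer from the $L^2$ Hilbert complex, with its mixed boundary conditions encoded through the adjoint, to the smooth de Rham complex on a merely Lipschitz domain. This requires nontrivial density and regularity arguments tailored to the low-regularity boundary, but is well documented in the FEEC literature cited above, so I would mostly quote it rather than reprove it.
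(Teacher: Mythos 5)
Your chain of isomorphisms is sound and arrives at the right place, but it follows a genuinely different route from the paper. You work entirely on the primal side: $\frH^k \cong \frZ^k/\frB^k$ (using closedness of $\frB^k$), then $\frZ^k/\frB^k \cong H^k_{dR}(\Omega)$, then the de Rham and universal coefficient theorems. The paper instead exploits the other representation $\frH^k = \frZ^*_k \cap (\frB^*_k)^\perp \cong \frZ^*_k/\frB^*_k$ and compares the \emph{dual} complex \eqref{dR_0} with the \emph{compactly supported} smooth complex, whose inclusion into the $L^2$ spaces with vanishing traces is an honest chain map; the dimension count is then closed by Poincaré duality (\cite[Pbm.~18.7]{lee_ISM_2012}) and by Theorem~1.1 and Corollary~4.7 of \cite{Costabel_McIntosh_2010}, which identify the cohomology of the distributional complex supported in $\overline\Omega$ with both the compactly supported smooth cohomology and the cohomology of \eqref{dR_0}. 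Your route is more direct in that it avoids Poincaré duality, and your step 1 and step 3 are unobjectionable.

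The soft spot is exactly your middle step, and you should be aware that it carries essentially all of the content of the theorem. There is no natural map between the complex of forms that are merely smooth on the open set $\Omega$ (to which the classical de Rham theorem applies) and the $L^2$ complex \eqref{dR}, since such smooth forms need not be square integrable; so ``density of smooth fields plus elliptic regularity of the Hodge Laplacian'' is not yet an argument, particularly on a Lipschitz boundary where interior hypoellipticity gives smooth representatives inside $\Omega$ but no control up to $\partial\Omega$. One genuinely needs a regularized Poincaré/Bogovskii-type operator result to show that the inclusion of a suitable smooth subcomplex induces an isomorphism on cohomology --- this is precisely what \cite{Costabel_McIntosh_2010} provides (their results also cover the primal, no-boundary-condition complex you use, so your route can be completed). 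Citing \cite{AFW_2006,arnold_feec_2018} for the isomorphism $\frZ^k/\frB^k \cong H^k_{dR}(\Omega)$ is borderline circular, since that isomorphism combined with de Rham's theorem \emph{is} the statement $\dim(\frH^k)=\beta_k$ being proved; the paper's stated purpose is to supply the complete argument that those references gloss over. So: correct architecture, different and viable route, but the decisive step needs the Costabel--McIntosh machinery (or an equivalent) rather than a pointer back to the result itself.
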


Since the classical version of de Rham's theorem is formulated for the smooth complex, we give here a detailed argument which uses a result of Costabel and McIntosh~\cite{Costabel_McIntosh_2010}, suggested to us by Douglas Arnold.

\begin{proof}
    According to Remark~\ref{rem:cHOmega} we may consider the homology groups on $\Omega$ which, as an open subset of $\RR^3$, is a smooth manifold (without boundary). We can then apply the de Rham theorem as formulated in~\cite[Th.~18.14]{lee_ISM_2012}, which states that the $k$-chain cohomology groups 
    $\cH^k(\Omega;\RR)$ (which are real vector spaces isomorphic to the dual spaces $(\cH_k(\Omega))'$ of linear forms on $\cH_k(\Omega)$, and hence of the same dimensions $\beta_k$) are isomorphic to the cohomology groups (spaces) of the smooth de Rham complex
    \begin{equation} \label{sm_dR}
    0 
    \xrightarrow{~~} C^\infty(\Omega) 
    \xrightarrow{~ \bgrad ~} C^\infty(\Omega)^3
    \xrightarrow{~ \bcurl ~} C^\infty(\Omega)^3
    \xrightarrow{~ \Div ~} C^\infty(\Omega)
    \xrightarrow{~~} 0.
    \end{equation}
    Rewriting the latter as $ \dots \xrightarrow{} V^k_{\infty} \xrightarrow{~ d^k ~} V^{k+1}_{\infty} \xrightarrow{} \dots $
    and denoting by
    $\frZ^k_{\infty} := \{v \in V^k_{\infty} : d^k v = 0\}$
    and 
    $\frB^k_{\infty} := d^{k-1} V^{k-1}_{\infty}$
    the associated kernel and range spaces, one has then
    \begin{equation}
        \dim\big(\frZ^k_{\infty} \quotient \frB^k_{\infty}\big) = \beta_k.
    \end{equation}
    Moreover, a Poincaré duality theorem holds
    (\cite[Pbm.~18.7]{lee_ISM_2012}), which states that 
    the cohomology groups of the compactly supported 
    de Rham sequence (indexed with reverse order) have the same dimensions. 
    Namely, writing 
    \begin{equation} \label{cs_dR}
    0 
    \xleftarrow{~~} C^\infty_c(\Omega) 
    \xleftarrow{~ \Div ~} C^\infty_c(\Omega)^3
    \xleftarrow{~ \bcurl ~} C^\infty_c(\Omega)^3
    \xleftarrow{~ \bgrad ~} C^\infty_c(\Omega)
    \xleftarrow{~~} 0
    \end{equation}
    as 
    $ \dots \xleftarrow{} V^\infty_{c, k-1} \xleftarrow{~ d^*_{k} ~} V^\infty_{c, k} \xleftarrow{} \dots $
    and denoting the associated kernel and range spaces by
    $\frZ^\infty_{c,k} := \{v \in V^\infty_{c,k} : d^*_k v = 0\}$
    and 
    $\frB^\infty_{c,k} := d^*_{k+1} V^{\infty}_{c,k+1}$,
    one has 
    \begin{equation}
    \dim\big(\frZ^\infty_{c,k} \quotient \frB^\infty_{c,k}\big)
        = \dim\big(\frZ^k_{\infty} \quotient \frB^k_{\infty}\big).
    \end{equation}
    To link the above relations with the Hilbert harmonic spaces \eqref{frH}, 
    we next invoke two results from~\cite{Costabel_McIntosh_2010}.
    The first one is Theorem~1.1, which states that the 
    cohomology spaces of the sequences
    \begin{equation} \label{Vc_and_Hs}
    H^{-1}_{\overline{\Omega}}(\RR^3)
        \xleftarrow{~ d^*_{k} ~} H^0_{\overline{\Omega}}(\RR^3)
        \xleftarrow{~ d^*_{k+1} ~} H^1_{\overline{\Omega}}(\RR^3)
    \qquad \text{ and } \qquad 
    V^{\infty}_{c, k-1} 
        \xleftarrow{~ d^*_{k} ~} V^\infty_{c, k} 
        \xleftarrow{~ d^*_{k+1} ~} V^{\infty}_{c,k+1}
    \end{equation}
    can be represented with the same functions.
    Here, $H^s_{\overline{\Omega}}(\RR^3)$ is the space 
    of distributions in $H^s(\RR^3)$ with support in $\overline{\Omega}$,
    and the differential operators are meant in the sense of distributions
    over $\RR^3$. 
    We thus have 
    \begin{equation}
        \label{dim_CMI}
        \dim\big(\frZ^*_{\overline{\Omega},k} \quotient \frB^*_{\overline{\Omega},k}\big) = 
        \dim\big(\frZ^\infty_{c,k} \quotient \frB^\infty_{c,k}\big)
    \end{equation}
    with
    $$
    \frZ^*_{\overline{\Omega},k} 
        := \{v \in L^2_{\overline{\Omega}}(\RR^3) : d^*_k v = 0\}
    \qquad \text{ and } \qquad 
    \frB^*_{\overline{\Omega},k} := d^*_{k+1} H^1_{\overline{\Omega}}(\RR^3).
    $$
    The second useful result from~\cite{Costabel_McIntosh_2010} is Corollary~4.7, 
    which allows to write
    \begin{equation*}
    d^*_{k+1}H^1_{\overline{\Omega}}(\RR^n) 
    = d^*_{k+1} H_{\overline{\Omega}}(d^*_{k+1};\RR^n)        
    \end{equation*}
    where the last space is defined as
    $H_{\overline{\Omega}}(d^*_{k+1};\RR^n) 
    := \{v \in L^2_{\overline{\Omega}}(\RR^3) : 
    d^*_{k+1} v \in L^2_{\overline{\Omega}}(\RR^3) \}$.
    Since this space consists of the $L^2(\RR^3)$ functions which have a differential in $L^2$ and vanish outside of $\Omega$, 
    it coincides with the space $V^*_{k+1}$ from the complex 
    with homogeneous boundary conditions \eqref{dR_0}.
    As a result, we find 
    $$
    \frB^*_{\overline{\Omega},k} 
        = d^*_{k+1} H^1_{\overline{\Omega}}(\RR^3)
        = d^*_{k+1} V^*_{k+1} = \frB^*_k,
    $$
    and we also observe that
    $$
    \frZ^*_{\overline{\Omega},k} 
    = \{ v \in H_{\overline{\Omega}}(d^*_{k};\RR^n) : d^*_k v = 0\} 
    = \{ v \in V^*_k : d^*_k v = 0\} = \frZ^*_k.
    $$
    It follows that \eqref{dim_CMI} rewrites as 
    \begin{equation}
        \label{dim_H*dRc}
        \dim\big(\frZ^*_{k} \quotient \frB^*_{k}\big) = 
        \dim\big(\frZ^\infty_{c,k} \quotient \frB^\infty_{c,k}\big).
    \end{equation}
    The proof is finally completed by observing that the orthogonal projection 
    onto $(\frB^*_k)^\perp$ induces an isomorphism 
    between the quotient space $\frZ^*_k \quotient \frB^*_k$ 
    and the harmonic space $\frH^k = \frZ^*_k \cap (\frB^*_k)^\perp$.
\end{proof}

\subsection{Scalar and vector potentials}

An important property of the complexes \eqref{dR} and \eqref{dR_0} is that the range spaces $\frB^k$ and $\frB^*_k$ are closed in $L^2(\Omega)$~\cite{AFW_2006}. As a result, one can write a general Hodge-Helmholtz decomposition of the form 
\begin{equation}
    L^2(\Omega) = \frB^k \poplus \frH^k \poplus \frB^*_k
\end{equation}
which yields two $L^2$-orthogonal decompositions for vector fields: one that corresponds to $k=1$,
\begin{equation} \label{HH_dec_1}
    L^2(\Omega) = \bgrad H^1(\Omega) \poplus \frH^1 \poplus \bcurl H_0(\bcurl;\Omega)
\end{equation}
and another one corresponding to $k = 2$,
\begin{equation} \label{HH_dec_2}
    L^2(\Omega) = \bcurl H(\bcurl;\Omega) \poplus \frH^2 \poplus \bgrad H^1_0(\Omega).
\end{equation}
With regard to potentials, the question is then to know whether harmonic vector fields belong to some range space. It is clear from \eqref{HH_dec_1} that the tangential harmonic fields in $\frH^1$ have no scalar potential, and from \eqref{HH_dec_2} we see that the normal harmonic fields in $\frH^2$ have no vector potential. But the reverse may be true, and a simple argument shows that it is indeed.

\begin{proposition}
The following inclusions hold:
\begin{equation}
    \label{H0c0_sub_grad}
    H_0(\bcurl 0;\Omega) \subseteq \bgrad H^1(\Omega),
\end{equation}
and 
\begin{equation}
    \label{H0d0_sub_curl}
    H_0(\Div 0;\Omega) \subseteq \bcurl H(\bcurl;\Omega).
\end{equation}
In particular, every field in $\frH^1$ admits a vector potential
in $H(\bcurl;\Omega)$, and every field in $\frH^2$ admits a vector potential
in $H^1(\Omega)$.
\end{proposition}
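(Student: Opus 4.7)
The plan is to prove both inclusions by the classical technique of extending the given field by zero to all of $\RR^3$, and then invoking a Poincaré-type lemma whose validity on $\RR^3$ follows from its being contractible. This trades the homogeneous boundary conditions ($\bn \times \bv = 0$ for \eqref{H0c0_sub_grad}, $\bn \cdot \bu = 0$ for \eqref{H0d0_sub_curl}) for purely distributional identities on the whole of $\RR^3$, at which point there is no topology left to obstruct the existence of a potential.

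For \eqref{H0c0_sub_grad}, I would take $\bv \in H_0(\bcurl 0;\Omega)$ and denote by $\tilde \bv \in L^2(\RR^3)^3$ its extension by zero. A standard characterization of $H_0(\bcurl;\Omega)$, analogous to the one invoked for $H_0(\Div;\Omega)$ in the proof of Theorem~\ref{th:dRL2}, gives $\tilde \bv \in H(\bcurl;\RR^3)$ with $\bcurl \tilde \bv = \widetilde{\bcurl \bv} = 0$. Since $\RR^3$ is simply connected, any $L^2$ curl-free field on it is a gradient: there exists $\tilde \phi \in H^1_{\mathrm{loc}}(\RR^3)$ with $\tilde \bv = \bgrad \tilde \phi$. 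Restricting to $\Omega$ yields $\phi := \tilde \phi|_\Omega \in H^1(\Omega)$ and $\bv = \bgrad \phi$.

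For \eqref{H0d0_sub_curl}, the same extension procedure applied to $\bu \in H_0(\Div 0;\Omega)$ produces $\tilde \bu \in H(\Div;\RR^3)$ with $\Div \tilde \bu = 0$. To exhibit a vector potential on $\RR^3$ I would convolve $\tilde \bu$ with the Newtonian fundamental solution $G$ of $-\Delta$, setting $\bvp := G \ast \tilde \bu$. Since $\Div \bvp = G \ast \Div \tilde \bu = 0$, the identity $\bcurl \bcurl \bvp = \bgrad \Div \bvp - \Delta \bvp = \tilde \bu$ shows that $\tilde \bw := \bcurl \bvp$ satisfies $\bcurl \tilde \bw = \tilde \bu$. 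Elliptic regularity for the Newtonian potential gives $\bvp \in H^2_{\mathrm{loc}}(\RR^3)^3$ and hence $\tilde \bw \in H^1_{\mathrm{loc}}(\RR^3)^3$, so that restricting to $\Omega$ yields $\bw \in H^1(\Omega)^3 \subset H(\bcurl;\Omega)$ with $\bu = \bcurl \bw$.

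Once both inclusions are in hand, the in-particular statement falls out of \eqref{frH_expl}: $\frH^1 \subset H_0(\Div 0;\Omega)$ combined with \eqref{H0d0_sub_curl} provides the vector potential in $H(\bcurl;\Omega)$, while $\frH^2 \subset H_0(\bcurl 0;\Omega)$ combined with \eqref{H0c0_sub_grad} produces the potential in $H^1(\Omega)$. The only load-bearing facts are the two extension-by-zero lemmas and the $L^2$-level Poincaré lemma on $\RR^3$; these are standard, and I expect the only genuine subtlety to be in making the $\RR^3$ Poincaré step rigorous at low regularity (e.g.\ via a Fourier-side inversion $\hat{\tilde\phi}(\xi) = -i\,\xi\cdot\hat{\tilde \bv}(\xi)/|\xi|^2$, or via a mollification-plus-passage-to-the-limit argument).
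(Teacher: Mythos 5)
Your proposal is correct and follows essentially the same route as the paper: extend by zero to a contractible superset of $\overline\Omega$ (the paper uses a ball $B_R$, you use all of $\RR^3$), invoke the triviality of the topology there to produce a potential, and restrict back to $\Omega$; the concluding step via \eqref{frH_expl} matches as well. The only difference is cosmetic — where the paper simply cites the Poincar\'e lemma on $B_R$ for both cases, you make the divergence-free case explicit via the Newtonian potential and the identity $\bcurl\bcurl = \bgrad\Div - \Delta$, which is a perfectly valid (and self-contained) way to realize that step.
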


\begin{proof}
If $\bv \in H_0(\bcurl 0;\Omega)$, resp. $\bv \in H_0(\Div 0;\Omega)$,
then letting $\bar \bv$ be its extension by 0 
on an open ball $B_R$ that contains $\overline \Omega$ yields 
$\bar \bv \in H_0(\bcurl 0;B_R)$, resp. $\bar \bv \in H_0(\Div 0;B_R)$. 
Since $B_R$ is contractible this allows to find $\bar \phi \in H^1_0(B_R)$
such that $\bar \bv = \bgrad \bar \phi$, resp. $\bar \bA \in H_0(\bcurl;B_R)$
such that $\bar \bv = \bcurl \bar \bA$. After restriction to $\Omega$, this yields $\bv = \bgrad \phi$ with $\phi := \bar \phi|_{\Omega} \in H^1(\Omega)$,
respectively $\bv = \bcurl \bA$ with $\bA := \bar \bA|_{\Omega} \in H(\bcurl;\Omega)$.
The last statement is a direct consequence of the definitions \eqref{frH}--\eqref{frH_expl}.
\end{proof}

\begin{remark}
    For a characterization of smooth fields with scalar or vector potentials, 
    see~\cite{cantarella_vector_2002}.
\end{remark}

\subsection{Scalar potentials for the normal harmonic fields}
\label{sec:scapot}

For the space $\frH^2$ of normal harmonic fields, a natural approach is to consider scalar potentials with constant values on each connected part of the boundary.
Following~\cite{picard_boundary_1982} and~\cite{ABDG_1998}, define
\begin{equation} \label{H1c}
    H^1_S(\Omega) := \{ \vp \in H^1(\Omega) : \vp|_{S_0} = 0 ~ \text{ and } ~ \vp|_{S_i} = \text{constant}, ~ 1 \le i \le \beta_2 \}
\end{equation}
and for each $1 \le i \le \beta_2$, 
let $\phi_i \in H^1_S(\Omega)$ solve the Poisson problem 
\begin{equation}
    \label{phi_i_pbm}
    \sprod{\bgrad \vp}{\bgrad \phi_{i}} = \vp|_{S_i}
        \qquad \forall \vp \in H^1_S(\Omega).
\end{equation}
This problem is clearly well-posed thanks to Poincaré's inequality 
on $H^1_S(\Omega)$, which holds since $S_0 \neq \emptyset$, 
and we have the following result (see, e.g.~\cite[Prop.~3.18]{ABDG_1998}).
\begin{proposition} \label{prop:gradbasis}
    The vector fields $\bv_i := \bgrad \phi_i$, $1 \le i \le \beta_2$, form
    a basis of $\frH^2$, and it holds
    \begin{equation}
        \label{fluxes_gradphi}
        \int_{S_j} \bn \cdot \bv_i = \delta_{i,j}, 
        \quad 1 \le j \le \beta_2.
    \end{equation}
\end{proposition}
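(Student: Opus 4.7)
The plan is to verify that each $\bv_i$ lies in $\frH^2$, compute the flux formula, and then deduce linear independence; the basis property then follows by a dimension count using Theorem~\ref{th:dRL2}.

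First I would check that $\bv_i \in \frH^2$, i.e.\ that $\bv_i \in H(\Div 0;\Omega) \cap H_0(\bcurl 0;\Omega)$. Since $\bv_i = \bgrad \phi_i$ with $\phi_i \in H^1(\Omega)$, it is automatically in $L^2(\Omega)$ and curl-free. The tangential trace $\bn \times \bv_i$ vanishes on each $S_j$ because $\phi_i|_{S_j}$ is constant (hence its surface gradient is zero), which gives $\bv_i \in H_0(\bcurl 0;\Omega)$. For divergence-freeness, I would plug in an arbitrary $\vp \in H^1_0(\Omega) \subset H^1_S(\Omega)$ into \eqref{phi_i_pbm}; since $\vp|_{S_i} = 0$, the right-hand side vanishes, which means $\Div \bv_i = 0$ in $\cD'(\Omega)$ and hence $\bv_i \in H(\Div 0;\Omega)$.

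Next I would derive the flux identity \eqref{fluxes_gradphi}. Since $\bv_i \in H(\Div; \Omega)$, the normal trace $\bn \cdot \bv_i \in H^{-1/2}(\partial\Omega)$ is well defined, and the generalized Green formula gives, for every $\vp \in H^1_S(\Omega)$,
\begin{equation*}
\sprod{\bgrad \vp}{\bgrad \phi_i}
= \langle \bn \cdot \bv_i, \vp\rangle_{\partial \Omega} - \int_\Omega \vp \Div \bv_i
= \sum_{j=1}^{\beta_2} \vp|_{S_j} \int_{S_j} \bn \cdot \bv_i,
\end{equation*}
where we used $\vp|_{S_0} = 0$, the piecewise-constant trace of $\vp$ on the remaining $S_j$, and $\Div \bv_i = 0$. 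Comparing this with the right-hand side $\vp|_{S_i}$ of \eqref{phi_i_pbm} and choosing test functions $\vp \in H^1_S(\Omega)$ with $\vp|_{S_k} = \delta_{j,k}$ (which exist by the definition of $H^1_S$ and non-degeneracy of $\Omega$) yields $\int_{S_j} \bn \cdot \bv_i = \delta_{i,j}$.

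Finally, the flux identity immediately gives linear independence: if $\sum_{i=1}^{\beta_2} c_i \bv_i = 0$, integrating $\bn \cdot (\cdot)$ over $S_j$ gives $c_j = 0$ for every $j$. Since $\dim \frH^2 = \beta_2$ by Theorem~\ref{th:dRL2}, the $\beta_2$ linearly independent vectors $\bv_i \in \frH^2$ form a basis.

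The only step that requires a little care is the Green's formula in the second paragraph, and in particular justifying that the duality pairing $\langle \bn \cdot \bv_i, \vp \rangle_{\partial \Omega}$ splits as a sum over the connected components $S_j$ and reduces to ordinary surface integrals times the constant traces $\vp|_{S_j}$. This is standard once one notes that $H^1_S(\Omega)$ only contains functions whose boundary trace lies in the finite-dimensional subspace spanned by the indicator-type functions of the $S_j$'s, so no genuine $H^{-1/2}$ pairing technicalities survive.
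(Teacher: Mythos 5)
Your proof is correct and follows essentially the same route as the paper's: verify membership in $\frH^2$, derive the flux identity from Green's formula together with the variational equation \eqref{phi_i_pbm}, then conclude linear independence and invoke the dimension count of Theorem~\ref{th:dRL2}. The only cosmetic difference is that the paper establishes $\bv_i \in H_0(\bcurl;\Omega)$ by introducing cut-off functions $\psi_j \in H^1_S(\Omega)$ that are constant in a neighborhood of $\partial\Omega$ (which sidesteps the surface-gradient argument you sketch, a step that needs a little care on a merely Lipschitz boundary), and then reuses these same $\psi_j$ as the test functions in your final flux computation.
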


\begin{proof}
    We remind the argument for the sake of completeness.
    The first observation is that 
    \begin{equation} \label{gradH1c}
        \bgrad H^1_S(\Omega) \subset H_0(\bcurl;\Omega)
    \end{equation}
    which is easy to verify: for all $1 \le j \le \beta_2$, 
    fix a function $\psi_j \in H^1_S(\Omega)$
    that is constant on a (disconnected) neighborhood of $\partial \Omega$
    and satisfies
    \begin{equation} \label{psi_i}
        \psi_j|_{S_0} = 0 \quad \text{ and } \quad
        \psi_j|_{S_l} = \delta_{j,l}, \quad 1 \le l \le \beta_2.
    \end{equation}
    For any $\vp \in H^1_S(\Omega)$, the function 
    $
    \bgrad \big( \vp - \sum_{j=1}^{\beta_2} (\vp|_{S_j})\psi_j\big)
    $
    then belongs to $H_0(\bcurl;\Omega)$ as the gradient of a function 
    in $H^1_0(\Omega)$, and its trace clearly coincides with that of $\bgrad \vp$.
    This essentially shows that $\bv_i \in \frH^2$: indeed
    one obviously has $\bcurl \bv_i = 0$ and 
    $\Div \bv_i = 0$ follows by taking $\vp \in C^\infty_c(\Omega)$ 
    in \eqref{phi_i_pbm}.
    To verify \eqref{fluxes_gradphi} we then use again the functions $\psi_j$
    and compute     
    \begin{equation*}
    \int_{S_j} \bn \cdot \bv_i
    = \int_{\partial \Omega} \psi_j ~ \bn \cdot \bv_i
    = \sprod{\bgrad \psi_j}{\bv_i}
    = \sprod{\bgrad \psi_j}{\bgrad \phi_i} = \psi_j|_{S_i} = \delta_{i,j}. 
    \end{equation*}
    This in particular shows that the vector fields $\bv_i$ are linearly
    independent. The spanning property then follows from Theorem~\ref{th:dRL2}.
\end{proof}

The next section describes a construction of vector potentials for tangent harmonic fields that extends the previous ideas.

\section{Construction of vector potentials for tangent harmonic fields}
\label{sec:construct}

\subsection{Tunnel curves and reciprocal surfaces}
\label{sec:assum}

Motivated by the discussion in~\cite[Sec.~3]{BGF_notes_1957}, our construction relies on the assumption that the domain $\Omega$ admits piecewise smooth ``tunnel curves'' $\Gamma_i$, which are closed curves 
lying on the boundary $\partial \Omega$, and ``reciprocal surfaces'' $\Sigma_i \subset \overline{\Omega}$, $1 \le i \le \beta_1$, whose boundaries $\partial \Sigma_i$ also lie on the domain boundary and intersect the tunnel curves in a dual manner, 
in the sense that the number of intersection points of
$\Gamma_i$ and $\Sigma_j$ satisfies 
\begin{equation}
    \label{I_duality}
    I(\Gamma_i, \Sigma_j) = \delta_{i,j}, \quad 1 \le i,j \le \beta_1.
\end{equation}

Aside from regularity considerations, the fact that such a property holds for general domains follows from classical results in algebraic topology, and in particular from the geometric realization of Poincaré-Lefschetz duality through intersection pairings.
This important duality result states that the $k$-chain homology groups 
$\cH_k(\overline{\Omega})$ are isomorphic to the $(3-k)$-chain relative homology groups $\cH_{3-k}(\overline{\Omega}, \partial \Omega)$, which are the homology groups of the singular chains \textit{modulo the boundary} -- 
two chains being equal modulo the boundary if their difference lies on $\partial \Omega$, see e.g. \cite{lefschetz_introduction_1949, BGF_notes_1957}.
A modern exposition of these results can be found in the detailed
lecture notes of Stefan Friedl \cite{friedl_topology_2024}: applying 
Theorem 194.14 and Proposition 198.2 
one finds indeed that there exists 
a non-singular pairing between the above 
(torsion-free) homology groups,
\begin{equation}
    \label{asQ}
Q = Q^{as}_{\overline{\Omega},\emptyset,\partial \Omega}: 
\cH_k(\overline{\Omega}) \times \cH_{3-k}(\overline{\Omega}, \partial \Omega) \to \ZZ ~,
\end{equation}
called \textit{asymmetric intersection pairing} 
for the reason that for any pair of cycles $\gamma \in \cZ_k(\overline{\Omega})$ and $\sigma \in \cZ_{3-k}(\overline{\Omega}, \partial \Omega)$ which intersect nicely, $Q([\gamma], [\sigma])$ coincides with the algebraic intersection number of $\gamma$ and $\sigma$. 

With $k=1$, we thus find that for any 
collection of (reasonably smooth) closed curves $\Gamma_i$
representing a basis of $\cH_1(\overline{\Omega})$,
there exists a collection of surfaces 
$\Sigma_i$ which are closed modulo to the boundary,
represent a basis of 
$\cH_{2}(\overline{\Omega}, \partial \Omega)$
and are such that 
$Q([\Gamma_i], [\Sigma_j]) = \delta_{i,j}$ for all $i,j$.
We then observe that the curves $\Gamma_i$ 
may be chosen to lie on the domain boundary
without changing their homology class $[\Gamma_i]$,
and that the surfaces fulfill the above requirements
(as long as they can be chosen to intersect nicely the curves,
which is not a restrictive assumption in a Lipschitz domain $\Omega$):
being closed modulo the boundary means indeed that 
$\partial \Sigma_i \subset \partial \Omega$,
and a relation of the form \eqref{I_duality} 
follows directly from the $Q$-duality of the homology classes, 
with the specification that $I$ denotes 
the \textit{algebraic} intersection number of the singular chains,
which weighs each intersection with a sign corresponding to their respective
orientations.

\begin{figure}[ht!]
    \vspace{-5.5cm}
    \begin{center}
    \def\svgwidth{350pt}
    \input{inkscape_figs/mapping.pdf_tex}
    \end{center}
\caption{Illustration of a tunnel curve $\Gamma_i$ and its neighborhood $X_i$, parametrized with a piecewise smooth homeomorphism $T_i$ defined on the (straight) torus $\cO$, periodic along the variable~$\theta$.}
\label{fig:mapping_T}
\end{figure}

To formulate our precise working hypotheses, we now introduce a couple of definitions.

\begin{definition}
    \label{def:pwCk}
    A function $u$ defined on a bounded Lipschitz domain $M \subset \RR^d$ 
    is said piecewise $C^k$, $k \ge 0$, if there exists a decomposition 
    $\overline M = \cup_{a = 1}^{m} \overline M_{a}$ 
    into disjoint open cells $M_a$ with Lipschitz, piecewise $C^1$ boundaries, 
    such that for all $1 \le a \le m$, the restriction $u \vert_{M_a}$ 
    is in $C^k(\overline {M_{a}})$. If in addition $u$ is invertible 
    and $u^{-1}$ is piecewise $C^k$ over the cells $u(M_{a})$, $1 \le a \le m$,
    then we say that $u$ is a piecewise $C^k$ diffeomorphism.
\end{definition}
In the sequel we will refer to such a decomposition
$\cM = \{M_a : a = 1, \dots m\}$ as a \textit{mesh}, 
and we will denote by $C^k(\cM)$ the space of piecewise $C^k$ 
functions over $\cM$.

\begin{definition} \label{def:compsurf}
    We say that a surface $\Sigma$ and a mesh
    $\cM$ are compatible if there exists a decomposition 
    $\Sigma = \cup_{a = 1}^{n} \Sigma_{a}$ 
    made of smooth surfaces with corners \cite{lee_ISM_2012}, such that
    the interiors $\Sigma_{a} \setminus \partial \Sigma_{a}$
    are disjoint and included in 
    closed mesh cells $\overline M_{b}$ 
    for some $b = b(a) \in \{0, \dots m\}$.
\end{definition}

Our working hypotheses read then as follows.
\begin{assumption} 
    \label{as:Gamma} 
    There exist oriented curves
    $\Gamma_i \subset \partial\Omega$
    and surfaces $\Sigma_i \subset \overline{\Omega}$,
    $i=1,\ldots \beta_1$,
    with the following properties:
    \begin{itemize}
    
        \item        
        Each curve $\Gamma_i$ is closed and piecewise smooth, in particular
        there is a homeomorphism
        \begin{equation}
            T_i: \cO \ni(r,\theta,z) \mapsto \bx\in X_i
        \end{equation}
        between the parametric domain $\cO := [0, 1)\times\TT\times(-1, 1)$,
        where $\TT := \RR \quotient \ZZ$,
        and some subdomain $X_i \subset \overline{\Omega}$,
        such that 
        \begin{itemize}
            \item[(i)]
            the surface $\{r = 0\}$ parametrizes the boundary part of $X_i$, i.e., 
            \begin{equation}
                X_i \cap \partial\Omega = T_i(\{0\}\times\TT\times(-1, 1))
            \end{equation}
            
            \item[(ii)]
            the curve $\hat \Gamma := \{r = 0, z = 0\} \cong \TT$
            parametrizes $\Gamma_i \subset X_i$, i.e.,
            \begin{equation}
                \Gamma_i = T_i(\{0\}\times\TT\times\{0\})
            \end{equation}
            and $\Gamma_i$ is oriented by the
            unit tangent vector $\btau_{\Gamma_i}$ along 
            increasing values of $\theta$

            \item[(iii)]
            the mapping $T_i$ is a piecewise $C^\infty$ diffeomorphism
            in the sense of Definition~\ref{def:pwCk}.
            Letting $\cO_{i,a}$, $a = 1, \dots m_i$, be the associated 
            open mesh cells of $\cO$, we denote by
            \begin{equation}
                \label{Xia}
                \cX_i = \{X_{i,a} : a = 0, \dots m_i\}
                \quad \text{with} \quad 
                \begin{cases}
                    X_{i,0} := \Omega \setminus \overline{X}_i,
                    \\
                    X_{i,a} := T_i(\cO_{i,a}), ~~ a = 1, \dots m_i,
                \end{cases}
            \end{equation} 
            the corresponding mesh of $\Omega$.
        \end{itemize}
                
        \item 
        Each surface $\Sigma_i$ is Lipschitz,
        compatible with all the meshes $\cX_j$, $j = 1, \dots \beta_1$,
        in the sense of Definition~\ref{def:compsurf},
        and its (manifold) boundary $\partial \Sigma_i$ is included in $\partial \Omega$.
    \end{itemize}

    Finally, the above curves and surfaces 
    are reciprocal in the sense that $\Gamma_i$ intersects
    $\Sigma_j$ once if $i = j$, and zero times if $i \neq j$.
    More precisely, observing that 
    $\Gamma_i \cap \Sigma_j = \Gamma_i \cap \partial\Sigma_j 
    \subset X_i \cap \partial\Sigma_j$, 
    it holds
    \begin{equation}
        \label{XGprime}
        X_i \cap \partial\Sigma_j = 
        \begin{cases}
            T_i\big(\{0\}\times\{0\}\times(-1,1)\big) & \text{ if } i=j,
            \\
            \emptyset & \text{ if } i \neq j,
        \end{cases}
    \end{equation}
    and we orient each surface $\Sigma_i$ by letting
    its unit normal vector $\bn_{\Sigma_i}$ 
    at the intersection point $P_i = \Gamma_i \cap \Sigma_i = T_i(0,0,0)$
    satisfy $\bn_{\Sigma_i} \cdot \btau_{\Gamma_i} > 0$.
\end{assumption}

\begin{remark}
    In the above assumption, and in particular in \eqref{XGprime},
    we have considered for simplicity 
    that an intersection duality of the form \eqref{I_duality}
    can be realized with \textit{geometric} intersection numbers
    $I(\Gamma_i, \Sigma_j) = \#(\Gamma_i \cap \Sigma_j)$,
    which is the case for virtually any domain we can think of.
    In the general case however, this duality
    is only established with \textit{algebraic} intersection numbers
    as discussed above.
    Instead of \eqref{XGprime}, one should thus assume that for any $i,j$, 
    there exists distinct points 
    $\theta(i,j,p) \in \TT$, $1 \le p \le n(i,j)$
    (with $n(i,j)$ possibly zero), such that 
    \begin{equation}
        \label{XGprime_alg}
        X_i \cap \partial\Sigma_j = 
        \cup_{1 \le p \le n(i,j)} \, 
        T_i\big(\{0\}\times\{ \theta(i,j,p) \}\times(-1,1)\big)
    \end{equation}
    holds, with normal vectors $\bn_{\Sigma_i}$ and 
    indices at the intersection points 
    $P_{i,j,p} = T_i(0,\theta(i,j,p),0)$,
    \begin{equation}
        \label{indices_alg}
        {\rm ind}(i,j,p) := 
        {\rm sign}\Big( (\btau_{\Gamma_i} \cdot \bn_{\Sigma_j})(P_{i,j,p}) \Big) 
        \in \{-1, +1\},
    \end{equation}
    that satisfy the duality relation
    \begin{equation}
        \label{ind_duality}
        I(\Gamma_i,\Sigma_j) := 
        \sum_{p=1}^{n(i,j)} {\rm ind}(i,j,p) = \delta_{i,j}, 
        \quad 1 \le i,j \le \beta_1.
    \end{equation}
    In our analysis below we will consider this general case
    since it is handled by the same arguments.
\end{remark}

\begin{remark}
    The fact that the curves $\Gamma_i$ represent a basis of the first homology group $\cH_1(\overline{\Omega})$ follows from Assumption~\ref{as:Gamma} and properties of the intersection pairing \eqref{asQ}
    recalled at the beginning of Section~\ref{sec:assum}.
    Indeed, these curves being 1-cycles they represent 1-chain homology classes, and similarly the surfaces $\Sigma_i$ being closed
    modulo the boundary they represent relative homology classes 
    in $\cH_2(\overline{\Omega}, \partial \Omega)$.
    Their intersection duality assumed in 
    \eqref{XGprime} (or more generally in \eqref{XGprime_alg}--\eqref{ind_duality}) then yields $Q([\Gamma_i], [\Sigma_j]) = \delta_{i,j}$, which shows the linear independence of the classes $[\Gamma_i]$.
    Their generating property follows from the definition of $\beta_1$.
\end{remark}

\subsection{The surface flux functionals}

The main purpose of the reciprocal surfaces $\Sigma_i$ is to establish the linear independence of our tangent harmonic vector fields. 
To this end we first review a version of the usual Stokes formula for piecewise smooth curl-conforming fields. 
\begin{proposition}
    Let $\Sigma$ be an oriented surface compatible with a mesh $\cM$ 
    in the sense of Definition~\ref{def:compsurf}.
    Then the Stokes formula 
    \begin{equation}
        \label{Stokes_S}
        \int_{\Sigma} \bn \cdot \bcurl \bu = \int_{\partial \Sigma} \btau \cdot \bu
    \end{equation}
    holds for any $\bu \in C^\infty(\cM) \cap H(\bcurl;\Omega)$.
\end{proposition}

\begin{proof}
    Since any surface $\Sigma_{a}$, $a = 1, \dots n$, 
    involved in the decomposition of Definition~\ref{def:compsurf}
    is a smooth surface with corners and is included in some domain $\overline M_{b(a)}$ where $\bu$ is smooth, Stokes' theorem applies.
    Using the correspondence between vector fields and differential forms
    (see for instance \cite[Ch.~16]{lee_ISM_2012}),
    this yields 
    \begin{equation}
        \label{Stokes_Sia}
        \int_{\Sigma_{a}} \bn \cdot \bcurl \bu = \int_{\partial \Sigma_{a}} \btau \cdot \bu|_{M_{b(a)}}, \qquad 1 \le a \le n.
    \end{equation}
    Summing over $a$ then yields a collection of surface integrals which sum to the left-hand side integral in \eqref{Stokes_S}, and a collection of line integrals on the right-hand side: 
    some corresponding to interior curves $\gamma_{a,a'} = \partial \Sigma_{a} \cap \partial \Sigma_{a'}$ 
    with $a \neq a'$, and others corresponding to boundary curves
    $\gamma_{a} = \partial \Sigma_{a} \cap \partial \Sigma$.
    On interior curves which appear twice with opposite orientations, we have
    $$
    \int_{\gamma_{a,a'}} \btau_{\partial\Sigma_{a}} \cdot \bu|_{M_{b(a)}} 
    + \int_{\gamma_{a,a'}} \btau_{\partial\Sigma_{a'}} \cdot \bu|_{M_{b(a')}} = 
    \int_{\gamma_{a,a'}} \btau_{\partial\Sigma_{a}} \cdot (\bu|_{M_{b(a)}}-\bu|_{M_{b(a')}}) = 0
    $$
    where the last equality follows from the continuity of the tangential traces of piecewise smooth, globally $H(\bcurl;\Omega)$ fields, see e.g.~\cite[Prop.~2.2.32]{ACL_2018}.
    As a result, only the line integrals over boundary curves remain in the sum, yielding the 
    right-hand side integral in \eqref{Stokes_S}.
\end{proof}

Let us further study the flux functionals defined on the surfaces $\Sigma_i$.

\begin{proposition} \label{prop:Phi}
    For each $1 \le i \le \beta_1$, the flux functional
    \begin{equation}
        \label{Phi}
        \Phi_i : C^\infty_c(\Omega) \to \RR, 
        \qquad 
        \bw \mapsto \int_{\Sigma_i} \bn \cdot \bw.
    \end{equation}
    extends to a continuous linear form on $H_0(\Div;\Omega)$
    which vanishes on $\bcurl H_0(\bcurl;\Omega)$: we have
    \begin{equation}
        \label{Phi_curl0}
        \Phi_i(\bcurl \bu) = 0, \qquad \bu \in H_0(\bcurl;\Omega).
    \end{equation}
\end{proposition}

\begin{proof}
    The argument is similar to that used in the proof of~\cite[Lemma~3.10]{ABDG_1998}, 
    although simpler since we consider fluxes defined on single surfaces.
    Using the fact that $\Sigma_i$ is Lipschitz and oriented, we can find a scalar function
    $\vp_i \in H^1(\omega_i)$ with $\omega_i = \Omega \setminus \Sigma_i$,
    whose trace vanishes on the positive side of $\Sigma_i$ (as pointed by its normal vector $\bn$) 
    and is 1 on its negative side. For $\bw \in C^\infty_c(\Omega)$, Green's formula yields then
    \begin{equation}
        \Phi_i(\bw) 
        = \int_{\Sigma_i} \bn \cdot \bw = \int_{\omega_i} \bw \cdot \bgrad \phi_i + \int_{\omega_i} (\Div \bw) \phi_i
    \end{equation}
    and in particular one has
    \begin{equation}
        \Phi_i(\bw) \le \norm{\bw}_{L^2(\omega_i)} \norm{\bgrad \phi_i}_{L^2(\omega_i)} + \norm{\Div \bw}_{L^2(\omega_i)} \norm{\phi_i}_{L^2(\omega_i)}
        \le \norm{\bw}_{H(\Div;\Omega)} \norm{\phi_i}_{H^1(\omega_i)}
    \end{equation}
    so that $\Phi_i$ admits indeed a continous extension in $H_0(\Div;\Omega)$.
    To verify \eqref{Phi_curl0} we then observe that for $\bu \in C^\infty_c(\Omega)$, 
    Stokes' formula \eqref{Stokes_S} gives 
    \begin{equation}
        \label{***}
        \Phi_i(\bcurl \bu) = \int_{\Sigma_i} \bn \cdot \bcurl \bu
        = \int_{\partial \Sigma_i} \btau \cdot \bu= 0
    \end{equation}
    where the last equality follows from the inclusion
    $\partial \Sigma_i \subset \partial \Omega$.
    We conclude by using the density of $C^\infty_c(\Omega)$ in
    $H_0(\bcurl;\Omega)$, the inclusion 
    $\bcurl H_0(\bcurl;\Omega) \subset H_0(\Div;\Omega)$,
    and the continuity of $\Phi_i$ in the latter space.
\end{proof}

\subsection{Construction by decomposition}
\label{sec:dec}

Our vector potentials are defined as a sum
\begin{equation}
    \label{A_dec}
    \bA_i := \bA_i^b + \bA_i^0, \qquad 1 \le i \le \beta_1,
\end{equation}
where the first terms lift some suitable boundary condition into the domain, and the second are correction terms with homogeneous boundary conditions.

Specifically, the $\bA_i^b \in H(\bcurl;\Omega)$ are piecewise smooth fields 
which lift the following boundary conditions:
\begin{equation}
    \label{Ab_bc}
    \bn_{\partial\Omega} \cdot \bcurl \bA_i^b = 0 ~~ \text{ on } \partial \Omega
    \qquad \text{ and } \qquad
    \int_{\partial \Sigma_j} \btau \cdot \bA_i^b = \delta_{i,j}, \qquad 1 \le i, j \le \beta_1
\end{equation}
where $\bn_{\partial\Omega}$ is the unit normal vector on the Lipschitz surface $\partial \Omega$, pointing outside of $\Omega$.
Notice that the first condition may be reformulated using a 
surface operator (see, e.g.,~\cite[Cor.~3.1.16]{ACL_2018}) 
so that it only involves boundary values of $\bA^b_i$ indeed.
We will say that the $\bA_i^b$ are \textit{lifted boundary potentials}.

The \textit{correction potentials} $\bA_i^0$ must then satisfy
\begin{equation}
    \label{A0_props}
    \bA_i^0 \in H_0(\bcurl;\Omega)
    \quad \text{ and } \quad 
    \bcurl \bcurl \bA_i^0 = - \bcurl \bcurl \bA_i^b.
\end{equation}
These properties are indeed sufficient to obtain a generating family of harmonic vector potentials.

\begin{proposition} \label{prop:w_basis}
    If \eqref{Ab_bc} and \eqref{A0_props} hold,
    then the potentials \eqref{A_dec} 
    satisfy 
    $\bcurl \bA_i \in \frH^1$ and 
    \begin{equation}
        \label{Phi_curlA}
        \Phi_j(\bcurl \bA_i) = \delta_{i,j}, \quad 1 \le i,j \le \beta_1.
    \end{equation}    
    In particular, the fields $\bw_i := \bcurl \bA_i$ form a basis of $\frH^1$.

\end{proposition}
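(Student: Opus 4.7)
The plan is to verify the three claims in sequence: first that each $\bw_i := \bcurl \bA_i$ belongs to $\frH^1$, then the flux identity $\Phi_j(\bw_i) = \delta_{i,j}$, and finally the basis property via linear independence and a dimension count.

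For membership in $\frH^1 = H(\bcurl 0;\Omega) \cap H_0(\Div 0;\Omega)$, the curl-free property is immediate from the second identity in \eqref{A0_props}: $\bcurl \bw_i = \bcurl\bcurl \bA_i^b + \bcurl\bcurl \bA_i^0 = 0$. To obtain $\bw_i \in H_0(\Div 0;\Omega)$, I would treat the two summands of $\bw_i$ separately. The lifted part $\bcurl \bA_i^b$ is automatically divergence-free, and its normal trace vanishes by the first boundary condition in \eqref{Ab_bc}. For the correction part, I would rely on the standard fact that $\bA \in H_0(\bcurl;\Omega)$ implies $\bcurl \bA \in H_0(\Div 0;\Omega)$: integrating by parts against an arbitrary $\vp \in H^1(\Omega)$ gives $\sprod{\bcurl \bA_i^0}{\bgrad \vp} = 0$ because $\bcurl \bgrad \vp = 0$ and the boundary term $\int_{\partial\Omega}(\bn \times \bA_i^0)\cdot \bgrad \vp$ vanishes thanks to the homogeneous tangential trace of $\bA_i^0$. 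Specializing $\vp$ to $H^1_0(\Omega)$ and then to arbitrary $H^1(\Omega)$ yields the divergence-free property and the zero normal trace, respectively.

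For the flux identity, linearity of $\Phi_j$ reduces the computation to $\Phi_j(\bcurl \bA_i^b) + \Phi_j(\bcurl \bA_i^0)$. The second term vanishes by Proposition~\ref{prop:Phi} applied to $\bA_i^0 \in H_0(\bcurl;\Omega)$. For the first term, since $\bA_i^b$ is piecewise smooth on $\overline\Omega$, the piecewise Stokes formula established in the discussion following \eqref{Stokes_a} applies on the surface $\Sigma_j$ and gives
\begin{equation*}
    \Phi_j(\bcurl \bA_i^b) = \int_{\Sigma_j} \bn \cdot \bcurl \bA_i^b = \int_{\partial \Sigma_j} \btau \cdot \bA_i^b = \delta_{i,j},
\end{equation*}
the last equality being the second condition of \eqref{Ab_bc}.

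Once \eqref{Phi_curlA} is established, the matrix $(\Phi_j(\bw_i))_{i,j}$ is the identity, so any relation $\sum_i c_i \bw_i = 0$ yields $c_j = 0$ upon applying $\Phi_j$. The $\beta_1$ elements $\bw_i \in \frH^1$ are thus linearly independent, and the basis property follows from $\dim \frH^1 = \beta_1$ given by Theorem~\ref{th:dRL2}. The only mildly delicate point I foresee is verifying that the piecewise Stokes identity applies with the regularity imposed on $\bA_i^b$ and that the trace on which \eqref{Ab_bc} prescribes the value $\delta_{i,j}$ is consistent with the one used in the definition of $\Phi_j$; but both issues are already prepared in the material preceding Proposition~\ref{prop:Phi}, so no genuinely new technical work is needed.
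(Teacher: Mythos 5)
Your proposal is correct and follows essentially the same route as the paper's proof: curl-freeness from \eqref{A0_props}, the normal boundary condition handled separately for the lifted part (via \eqref{Ab_bc}) and the correction part (via $\bcurl H_0(\bcurl;\Omega) \subset H_0(\Div;\Omega)$), the flux identity from piecewise Stokes on $\bA_i^b$ plus Proposition~\ref{prop:Phi} for $\bA_i^0$, and the basis property from linear independence together with Theorem~\ref{th:dRL2}. The only difference is that you spell out the integration-by-parts argument for the inclusion $\bcurl H_0(\bcurl;\Omega) \subset H_0(\Div 0;\Omega)$, which the paper simply cites as known.
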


\begin{proof}
    The equality $\Div \bw_i = 0$ is obvious, and $\bcurl \bw_i = 0$ is clear from \eqref{A0_props}.
    The boundary condition 
    $\bw_i \in H_0(\Div;\Omega)$ is also easily
    verified, indeed it is satisfied by 
    $\bcurl \bA_i^b$ thanks to \eqref{Ab_bc},
    and by $\bcurl \bA_i^0$ thanks to the inclusion 
    $\bcurl H_0(\bcurl;\Omega) \subset H_0(\Div;\Omega)$.    
    This shows that the fields $\bw_i$ are indeed tangent and harmonic.
    To show the relations \eqref{Phi_curlA} we first consider the 
    lifted boundary part. 
    Since it is piecewise smooth over the mesh $\cX_i$, 
    Stokes' formula \eqref{Stokes_S} yields 
    \begin{equation*}
        \Phi_j(\bcurl\bA_i^b)
        = \int_{\Sigma_j} \bn \cdot \bcurl \bA^b_i
        = \int_{\partial \Sigma_j} \btau \cdot \bA^b_i = \delta_{i,j}
    \end{equation*}
    where the last equality is \eqref{Ab_bc}.
    The desired relation \eqref{Phi_curlA} then follows from the 
    fact that the flux functionals $\Phi_j$ vanish for the 
    $\bA^0_i$ parts thanks to \eqref{Phi_curl0}.
    The linear independence of the fields $\bw_i$ readily follows, and the fact that they span $\frH^1$ is a consequence of de Rham's Theorem~\ref{th:dRL2}.
\end{proof}

We now proceed to construct the different terms of our decomposition.

\subsection{Lifted boundary potentials}
\label{sec:As}

We construct the lifted boundary potentials by pushforward:
we first define a smooth, compactly supported 
$\widehat{\bA}^b$ on the parametric domain $\cO$, 
which satisfies
\begin{equation} \label{hAb_flux}
    \hat{\bn} \cdot \bcurl \widehat{\bA}^b = 0 \quad \text{ on } r = 0,    
\end{equation}
where $\hat{\bn} = -\hat{\be}_r$ is the unit normal vector pointing
outside of $\cO$ on its $\{r=0\}$ boundary (see Figure~\ref{fig:mapping_T}), and 
\begin{equation} \label{hAb_circ}
    \int_{-1}^{1} \hat{\be}_z \cdot \widehat{\bA}^b(0,\theta,z) \rmd z = 1, \qquad \forall \theta \in \TT.
\end{equation}
Note that since $\widehat{\bA}^b$ is compactly supported in $\cO$, 
it vanishes on every face with the exception of the face $\{r = 0\}$ which is contained in $\cO$: it is on this face that the above boundary conditions are imposed.

Specifically, we may set 
\begin{equation} \label{hA}
\widehat{\bA}^b(r, \theta, z) := 
\begin{pmatrix} 0 \\ 0 \\ \hat{\alpha}(z)\hat{\vp}(r) \end{pmatrix}
\end{equation}
with $\hat{\alpha} \in C_c^\infty(-1,1)$ and 
$\hat{\vp} \in\cC_c^\infty[0,1)$ such that
    $\int_{-1}^{1}\hat{\alpha}(z) \rmd z = 1$
    and 
    $\hat{\vp}(0) = 1$.
One easily verifies that this guarantees \eqref{hAb_flux}
and \eqref{hAb_circ}.
This logical field is then pushed forward to define a piecewise smooth,
compactly supported potential on the domain $X_i = T_i(\cO)$,
which is then extended by zero outside of $X_i$. Thus, we set 
\begin{equation} \label{Abi}
    \bA_i^b(\bx) := 
    \begin{cases}
        \cF_i^1(\widehat{\bA}^b)(\bx)
        ~~ &\text{if} ~ \bx \in X_i
        \\
        0 ~~ &\text{else}
    \end{cases}
\end{equation}
where $\cF^1_i(\hat \bu) := ((DT_i)^{-T} \hat \bu) \circ T_i^{-1}$
is the covariant Piola transform (with $DT_i$ the Jacobian matrix of 
$T_i$) corresponding to the pushforward operator for 1-form proxy fields~\cite{hiptmair_actanum_2002}.

\begin{proposition} \label{prop:Ab}
    The field $\bA_i^b$ just defined satisfies the properties stated above:
    it is piecewise $C^\infty$ over the mesh $\cX_i$ in \eqref{Xia}, it belongs to $H(\bcurl; \Omega)$ 
    and the boundary conditions \eqref{Ab_bc} hold.
\end{proposition}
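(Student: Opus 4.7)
The strategy is to reduce each of the three claims --- (a) $\bA_i^b \in H(\bcurl;\Omega)$, (b) $\bn \cdot \bcurl \bA_i^b = 0$ on $\partial \Omega$, and (c) $\int_{\partial \Sigma_j} \btau \cdot \bA_i^b = \delta_{i,j}$ --- to a corresponding property of $\widehat{\bA}^b$ on the parametric domain $\cO$, exploiting the covariant Piola transform $\cF^1_i$ together with its analogue $\cF^2_i$ for 2-form proxies.

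For (a), I would first argue that $\cF^1_i(\widehat{\bA}^b)$ lies in $H(\bcurl; X_i)$. Since $T_i$ is piecewise smooth in the sense of Assumption~\ref{as:Gamma}, the pushforward is smooth on each cell $X_{i,a} := T_i(\cO_{i,a})$. Moreover, because $\cF^1_i$ is the pullback of differential 1-forms, it preserves tangential continuity: since $\widehat{\bA}^b$ is globally smooth on $\cO$, the tangential components of its pushforward match across every interface where $T_i$ has a jump in its derivatives, ruling out any spurious curl singularity. To extend to $H(\bcurl; \Omega)$, I would use that $\widehat{\bA}^b$ is compactly supported away from $\{r=1\}$ and $\{z=\pm 1\}$ --- precisely the subsets of $\partial \cO$ whose images under $T_i$ constitute $\partial X_i \cap \Omega$; hence $\bA_i^b$ vanishes in a neighborhood of $\partial X_i \cap \Omega$, and the extension by zero preserves the $H(\bcurl)$ property.

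For (b), I would invoke the commutation of curl with the Piola transforms, $\bcurl \circ \cF^1_i = \cF^2_i \circ \bcurl$, with $\cF^2_i(\hat\bv) := ((DT_i)\hat\bv / \det DT_i) \circ T_i^{-1}$. Since $\cF^2_i$ preserves normal traces on corresponding boundary faces (up to an orientation-preserving Jacobian factor), the normal trace of $\bcurl \bA_i^b$ on $\partial \Omega \cap X_i = T_i(\{r=0\} \times \TT \times (-1,1))$ coincides with that of $\bcurl \widehat{\bA}^b$ on $\{r=0\}$, which vanishes by construction~\eqref{hAb_flux}; elsewhere on $\partial \Omega$ the condition is trivial since $\bA_i^b = 0$.

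For (c), when $i \neq j$ the reciprocity clause of Assumption~\ref{as:Gamma} gives $X_i \cap \partial \Sigma_j = \emptyset$, so $\bA_i^b$ vanishes on $\partial \Sigma_j$ and the integral is zero. When $i = j$, decomposing $\partial \Sigma_i$ into its connected components $\tilde \Gamma_{i,n}$, the field $\bA_i^b$ is nonzero only on the portion inside $X_i$, which Assumption~\ref{as:Gamma} identifies as the open curve segment $T_i(\{0\}\times\{0\}\times(-1,1))$, with endpoints at $z = \pm 1$ where $\hat \alpha$ already vanishes. The line-integral invariance of $\cF^1_i$ then reduces the circulation to $\int_{-1}^1 \hat \btau \cdot \widehat{\bA}^b(0,0,z)\,dz = 1$ by~\eqref{hAb_circ}. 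The main obstacle is step (a), where the merely piecewise-smooth structure of $T_i$ requires some care to ensure the global $H(\bcurl)$ regularity across the cell interfaces; steps (b) and (c) are then essentially explicit pointwise and one-dimensional calculations on $\cO$.
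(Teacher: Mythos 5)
Your proposal is correct and follows essentially the same route as the paper's proof: tangential continuity across the cell interfaces $\overline{X_{i,a}} \cap \overline{X_{i,b}}$ via the covariant Piola transform, vanishing of $\bA_i^b$ near $\partial X_i \setminus \partial \Omega$ from the compact support of $\widehat{\bA}^b$, and the commutation $\bcurl \circ \cF^1_i = \cF^2_i \circ \bcurl$ together with the invariance of line and surface integrals under pushforward to transfer \eqref{hAb_flux} and \eqref{hAb_circ} to \eqref{Ab_bc}. Your treatment of the circulation condition (splitting $i \neq j$ via reciprocity and localizing the $i=j$ case to $T_i(\{0\}\times\{0\}\times(-1,1))$) is in fact slightly more explicit than the paper's, and your flagged concern about the interface continuity is resolved exactly as the paper does, by noting that tangent vectors to an interface are images $DT_i\hat\btau$ of logical tangent vectors, for which the directional derivative is well defined from either side.
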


\begin{proof}
    By construction $\bA_i^b$ vanishes on $\overline X_{i,0}$, and its smoothness on $\overline X_{i,a}$, $a = 1, \dots m_i$, follows from the 
    smoothness of the logical potential \eqref{hA} and that of the mapping $T_i$ and its inverse on the subdomains 
    $\overline \cO_{i,a}$ and $\overline X_{i,a} = T_i(\overline \cO_{i,a})$. 
    To verify that $\bA_i^b \in H(\bcurl; \Omega)$, according to \cite[Prop.~2.2.32]{ACL_2018} we need to 
    show that (i) the tangential trace of the potential \eqref{Abi} 
    vanishes on $\partial X_i \setminus \partial \Omega$,
    and (ii) on every interface $f = \overline{X}_{i,a} \cap \overline{X}_{i,b}$ between two subdomains, and every vector $\btau_f$ tangent to $f$, 
    we have 
    \begin{equation}
        \label{tancontA}
        \btau_f \cdot \bA_i^b|_{X_{i,a}} 
        = \btau_f \cdot \bA_i^b|_{X_{i,b}} \quad \text{ on } f.
    \end{equation}
    The former property is clear since the field \eqref{hA}
    vanishes on a neighborhood of $\partial \cO \setminus \{r = 0 \}$,
    and the latter property follows from from the fact that a tangent 
    vector on $f$ is of the form 
    $$
    \btau_f(\bx) = (DT_i \hat \btau)(\hat \bx)
    $$
    where $\hat \btau$ is a vector tangent to the surface 
    $\hat f = T_i^{-1}(f)$, and where the Jacobian matrix of $T_i$ 
    can be evaluated from both subdomains $\cO_{i,a}$ or $\cO_{i,b}$
    since the directional derivative is taken along their common interface.
    The continuity of the tangential traces on $f$ then follows from
    the definition of the pushforward.

    We finally turn to the boundary conditions \eqref{Ab_bc}.
    Using the commutation relation
    $$
    \cF^2_i \bcurl \widehat{\bA^b} = \bcurl \cF^1_i  \widehat{\bA^b}
    $$
    satisfied by the 2-form pushforward operator $\cF^2_i$, see e.g. 
    \cite{hiptmair_actanum_2002}, and the fact that 
    surface integrals of smooth vector fields are 
    preserved by 2-form pushforwards \cite[Ch.~16.6]{lee_ISM_2012},
    we verify that the first boundary condition 
    $\bn_{\partial\Omega} \cdot \bcurl \bA_i^b = 0$ is satisfied 
    as a consequence of its parametric counterpart $\eqref{hAb_flux}$.
    To verify the second one, we remind that the intersection
    of $\partial \Sigma_j$ with $X_i$ 
    decomposes into $n(i,j)$ disjoint curves 
    according to \eqref{XGprime_alg}--\eqref{ind_duality}
    (or \eqref{XGprime} in the simpler case where $n(i,j) = \delta_{i,j}$)
    that is,
    $$
    X_i \cap \partial \Sigma_j =     
    \cup_{1 \le p \le n(i,j)} \tilde \Gamma(i,j,p)
    \quad \text{ with } ~ 
    \tilde \Gamma(i,j,p) = 
    T_i\big(\{0\}\times\{ \theta(i,j,p) \}\times(-1,1)\big)
    $$
    each curve being oriented by a unit vector induced 
    by the global orientation of $\Sigma_j$
    as defined by its normal vector field $\bn_{\Sigma_j}$.
    Using the fact that line integrals are preserved by 1-form pushforwards, we then write
    $$
    \int_{\partial \Sigma_j} \btau \cdot \bA_i^b 
    = 
    \sum_{p=1}^{n(i,j)} 
    \int_{\tilde \Gamma(i,j,p)} \btau \cdot \bA_i^b 
    =
    \sum_{p=1}^{n(i,j)} 
    \int_{-1}^{1} \hat \btau(i,j,p) \cdot \widehat{\bA_i^b}(0,\theta(i,j,p), z) \rmd z 
    $$
    where $\hat \btau(i,j,p)$
    is the tangent vector of the straight curve 
    $T_i^{-1}(\tilde \Gamma(i,j,p))$ 
    in the direction induced by that of $\tilde \Gamma(i,j,p)$,
    that is,
    $$
    \hat \btau(i,j,p) = {\rm ind}(i,j,p) \hat{\be}_z
    $$
    according to \eqref{indices_alg}. 
    Using \eqref{hAb_circ} and \eqref{ind_duality} then yields the desired result:
    $$
    \int_{\partial \Sigma_j} \btau \cdot \bA_i^b 
    = \sum_{p=1}^{n(i,j)} 
    {\rm ind}(i,j,p) 
    \int_{-1}^{1} \hat{\be}_z \cdot \widehat{\bA_i^b}(0,\theta(i,j,p), z) \rmd z 
    =
    \sum_{p=1}^{n(i,j)} 
    {\rm ind}(i,j,p) 
    = \delta_{i,j}.
    $$
\end{proof}

\subsection{Correction potentials}
\label{sec:A0}

To define the correction potentials we consider the following problem:
\medskip

Find $(\sigma_i,\bA_i^0,\bp_i)\in H_0^1(\Omega)\times H_0(\bcurl;\Omega)\times\frH^2$, such that 
\begin{equation}
    \label{A0_sys}
    \left\{\begin{aligned}
        \sprod{\sigma_i}{\tau} + \sprod{\bgrad\tau} {\bA_i^0} &= 0,\ &&\quad\forall\ \tau\in H_0^1(\Omega), 
        \\
        \sprod{\bgrad\sigma_i}{\bv} + \sprod{\bcurl\bA_i^0}{\bcurl\bv} + \sprod{\bp_i} {\bv} &= -\sprod{\bcurl\bA_i^b}{\bcurl\bv},\ &&\quad\forall\ \bv\in H_0(\bcurl;\Omega), 
        \\
        \sprod{\bA_i^0}{\bq} &= 0,\ &&\quad\forall\ \bq\in\frH^2.
    \end{aligned}\right.
\end{equation}

\begin{proposition} \label{prop:A0} 
    System \eqref{A0_sys} defines a unique field
    $\bA_i^0$ which satisfies \eqref{A0_props}.
\end{proposition}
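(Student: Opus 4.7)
The plan is to recognize the system \eqref{A0_sys} as the mixed variational formulation of a Hodge Laplacian in the dual de Rham complex at the level of $H_0(\bcurl;\Omega)$, with source $-\bcurl\bcurl \bA_i^b$ interpreted in the weak sense through the right-hand side of the second equation. Existence, uniqueness and stability of the triple $(\sigma_i, \bA_i^0, \bp_i)$ then follow from the standard well-posedness theory developed by Arnold, Falk and Winther~\cite{AFW_2006,AFW_2010}, whose ingredients (closed range of $\bgrad$ on $H^1_0(\Omega)$, closed range of $\bcurl$ on $H_0(\bcurl;\Omega)$, Poincaré-type inequalities, and the finite-dimensional harmonic space $\frH^2$) are all available since $\Omega$ is bounded and Lipschitz. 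In particular this yields $\bA_i^0 \in H_0(\bcurl;\Omega)$ for free, so the remaining task is to extract the curl-curl identity in \eqref{A0_props}.

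The key observation is that $\sigma_i$ and $\bp_i$ must both vanish. For $\sigma_i$: since $\sigma_i \in H^1_0(\Omega)$, its gradient $\bgrad \sigma_i$ belongs to $H_0(\bcurl;\Omega)$ (the tangential trace on $\partial\Omega$ coincides with the surface gradient of $\sigma_i|_{\partial\Omega} = 0$, hence vanishes). Testing the second equation of \eqref{A0_sys} with $\bv = \bgrad \sigma_i$ annihilates the two curl terms (because $\bcurl\bgrad = 0$) and also the $\bp_i$ contribution, since $\bp_i \in \frH^2$ is $L^2$-orthogonal to $\bgrad H^1_0(\Omega)$ by the Hodge-Helmholtz decomposition \eqref{HH_dec_2}. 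What survives is $\|\bgrad \sigma_i\|^2 = 0$, so Poincaré's inequality on $H^1_0(\Omega)$ yields $\sigma_i = 0$.

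For $\bp_i$, I test the same equation with $\bv = \bp_i$, which is admissible thanks to the inclusion $\frH^2 \subset H_0(\bcurl 0;\Omega) \subset H_0(\bcurl;\Omega)$. The curl terms vanish because $\bcurl \bp_i = 0$ by definition of $\frH^2$, and the gradient term vanishes by the previous step. This leaves $\|\bp_i\|^2 = 0$, hence $\bp_i = 0$. Substituting $\sigma_i = 0$ and $\bp_i = 0$ back into the second line of \eqref{A0_sys} reduces it to
\[
\sprod{\bcurl \bA_i^0}{\bcurl \bv} = -\sprod{\bcurl \bA_i^b}{\bcurl \bv}
\qquad \forall\, \bv \in H_0(\bcurl;\Omega),
\]
which is precisely the weak form of $\bcurl\bcurl \bA_i^0 = -\bcurl\bcurl \bA_i^b$, completing \eqref{A0_props}.

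The main obstacle is the well-posedness step: if invoking the abstract Hodge Laplacian theory is not deemed self-contained enough, one can argue directly via Babuška-Brezzi. The inf-sup bound on the coupling bilinear form $(\tau, \bA) \mapsto \sprod{\bgrad \tau}{\bA}$ is obtained by selecting $\bA = \bgrad \tau$ and invoking Poincaré on $H^1_0(\Omega)$, while coercivity of $\bA \mapsto \|\bcurl \bA\|^2$ on the constrained subspace $\{\bA \in H_0(\bcurl;\Omega) : \sprod{\bA}{\bq} = 0 \text{ for all } \bq \in \frH^2\} \cap (\bgrad H^1_0(\Omega))^\perp$ reduces to a Friedrichs-type inequality $\|\bA\| \leq C\|\bcurl \bA\|$, which holds precisely because we have quotiented out by the harmonic space $\frH^2$. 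Everything past that point is purely algebraic manipulation with carefully chosen test functions.
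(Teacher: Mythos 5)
Your proposal is correct and follows essentially the same route as the paper: well-posedness is obtained by invoking the abstract Hodge--Laplacian theory of Arnold, Falk and Winther applied to the homogeneous dual complex, and then $\sigma_i = 0$ and $\bp_i = 0$ are extracted by testing the second equation with $\bv = \bgrad\sigma_i$ and $\bv = \bp_i$, after which the curl--curl identity in \eqref{A0_props} is immediate. The only difference is that you spell out the orthogonality and admissibility checks (and sketch a direct Babu\v{s}ka--Brezzi alternative) that the paper leaves implicit.
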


\begin{proof}
The well-posedness follows by applying~\cite[Theorem 3.2]{AFW_2010}, 
to the adjoint (homogeneous) de Rham complex \eqref{dR_0}, which is possible thanks to the properties stated in~\cite[Sec.~4.2]{AFW_2010}.
By taking respectively 
$\bv = \bgrad\sigma_i$ and 
$\bv = \bp_i \in \frH^2$, we then observe that the solution satisfies
$\sigma_i=0$ and $\bp_i=0$.
The relation \eqref{A0_props} readily follows.
\end{proof}

\subsection{Main result and invariance principles}

By combining Propositions \ref{prop:w_basis}, \ref{prop:Ab} and \ref{prop:A0}, which hold under Assumption~\ref{as:Gamma},
we have proven our main result.
\begin{theorem} \label{th:H1_basis}
    The fields $\bA_i = \bA^b_i + \bA^0_i$
    constructed over the tunnel curves $\Gamma_i$, 
    $i = 1, \dots \beta_1$ as described above,
    are free vector potentials for the tangent harmonic fields.
    More precisely, the fields $\bw_i := \bcurl \bA_i$ 
    form a basis for the space $\frH^1$ 
    which is dual to the flux functionals $\Phi_i$ associated with the 
    reciprocal surfaces, in the sense that 
    \begin{equation}
        \Phi_j(\bcurl\bA_i)
        = \int_{\Sigma_j} \bn \cdot \bcurl \bA_i
        = \delta_{i,j}
    \end{equation}
    holds for all $1 \le i,j \le \beta_1$.
\end{theorem}

Our construction satisfies two (dual) invariance principles.
The first one is with respect to the tunnel curves:    
if $\Gamma_i$ and $\Gamma'_i$, $i = 1, \dots \beta_1$,
are two collections of piecewise smooth closed boundary curves 
satisfying the properties of Assumption~\ref{as:Gamma}
with the same reciprocal surfaces $\Sigma_i$, then 
their associated harmonic potentials are a priori different but their harmonic fields coincide
\begin{equation}
    \bcurl \bA_i = \bcurl \bA'_i, \quad i = 1, \dots \beta_1,
\end{equation}
since it follows from Theorem~\ref{th:H1_basis} that the 
fluxes $\Phi_i$ are unisolvent on the harmonic space $\frH^1$.

This invariance principle actually holds with respect to
the 1-chain \textit{homology classes}:
indeed if $\Gamma_i$ and $\Gamma'_i$ are two collections of piecewise smooth closed boundary curves that represent the same basis $[\Gamma_i]$ of the first homology group $\cH_1(\overline{\Omega})$, 
then we may consider the (unique) basis $[\Sigma_i]$ of the relative homology group $\cH_2(\overline{\Omega}, \partial \Omega)$ which is dual 
in the sense of the intersection pairing \eqref{asQ}, 
and any collection of piecewise smooth surfaces $\Sigma_i$ 
which represent this dual basis while intersecting nicely all the tunnel curves.
Because the intersection pairing coincides with the algebraic intersection 
number of any pair of representative cycles, we find that 
the surfaces $\Sigma_i$ are reciprocal to both curve collections,
so that the associated harmonic fields coincide again.

A dual invariance principle holds with respect to the reciprocal surfaces:
indeed if $\Sigma_i$ and $\Sigma'_i$, $i = 1, \dots \beta_1$,
are two collections of piecewise smooth surfaces
wich are reciprocal to the same tunnel curves $\Gamma_i$
in the sense of Assumption~\ref{as:Gamma}, then 
the harmonic fields $\bw_i = \bcurl \bA_i$ associated with these curves
have the same fluxes across both surface collections:
\begin{equation}
    \int_{\Sigma_i} \bn \cdot \bw_i 
    = \int_{\Sigma'_i} \bn \cdot \bw_i 
    , \quad i = 1, \dots \beta_1,
\end{equation}
as an immediate consequence of Theorem~\ref{th:H1_basis}.

Again, it follows from the properties of the intersection pairing \eqref{asQ} that this dual invariance principle actually holds with respect to the 
homology classes of 2-chains modulo the boundary.

\section{Structure-preserving discrete harmonic potentials}
\label{sec:discpot}

We end this article by describing how the above construction can be implemented in a discrete setting. The approach proposed here applies to structure-preserving finite element spaces that share a certain number of properties listed in Sections~\ref{sec:fem} and \ref{sec:discharm} below. 

Assuming that the domain $\Omega$ can be triangulated with a simplicial mesh, these properties hold for instance in the case of Whitney~\cite{whitney_git_1957,bossavit_whitney_1988} and Nédélec~\cite{nedelec_mixed_1980} finite element spaces, and more generally high order FEEC spaces~\cite{AFW_2006},
but they are not restricted to these cases.

\subsection{Discretization of the homogeneous and extended dual complexes}
\label{sec:fem}

Since harmonic fields can be described by potentials, using differential operators from the dual complex extended to spaces without homogeneous boundary conditions, a natural choice is to consider a strong discretization of the dual de Rham complex \eqref{dR_0}, seen as a subcomplex of the one without boundary conditions (i.e., the primal one).

To better reflect this choice, and distinguish this \textit{extended dual complex} from the primal complex \eqref{dR},
let us rename the spaces with homogeneous boundary 
conditions as $\tilde V^\ell_0 := V^*_{3-\ell}$ 
with operators $\tilde d^\ell_0 := d^*_{3-\ell}$.
We let then $\tilde V^\ell$ denote
the corresponding spaces without boundary conditions, 
equipped with their natural differential operators $\tilde d^\ell$.
Thus, we rewrite the dual complex \eqref{dR_0} as 
\begin{equation} \label{dR_tV0}
  0 
  \xleftarrow{~~} \tilde V^3_0 = L^2(\Omega)
  \xleftarrow{~ \tilde d_0^2 ~} \tilde V^2_0 = H_0(\Div; \Omega) 
  \xleftarrow{~ \tilde d_0^1 ~} \tilde V^1_0 = H_0(\bcurl;\Omega) 
  \xleftarrow{~ \tilde d_0^0 ~} \tilde V^0_0 = H^1_0(\Omega) 
  \xleftarrow{~~} 0
\end{equation}
and we see it as a subcomplex of 
\begin{equation} \label{dR_tV}
  0 
  \xleftarrow{~~} \tilde V^3 = L^2(\Omega)
  \xleftarrow{~ \tilde d^2 ~} \tilde V^2 = H(\Div; \Omega) 
  \xleftarrow{~ \tilde d^1 ~} \tilde V^1 = H(\bcurl;\Omega) 
  \xleftarrow{~ \tilde d^0 ~} \tilde V^0 = H^1(\Omega) 
  \xleftarrow{~~} 0.
\end{equation}
Notice that up to a change of signs in the grad and div operators, the latter coincides with the primal complex \eqref{dR}. For simplicity we may
disregard these sign changes and consider the plain differential operators
below. We distinguish, however, the primal and the extended dual complexes
which play different roles.

We then consider a discrete subcomplex of \eqref{dR_tV}, 
\begin{equation} \label{dR_tVh}
    0 
    \xleftarrow{~~} \tilde V^3_{h} 
    \xleftarrow{~ \Div ~} \tilde V^2_{h}
    \xleftarrow{~ \bcurl ~} \tilde V^1_{h}
    \xleftarrow{~ \bgrad ~} \tilde V^0_{h}
    \xleftarrow{~~} 0,
\end{equation}
made of finite-dimensional spaces of piecewise $C^\infty$ functions over some mesh $\cM_h$ of $\Omega$, made of cells with Lipschitz and piecewise smooth boundaries as in Definition~\ref{def:pwCk}.
Thus, we consider discrete spaces 
\begin{equation} 
    \tilde V^\ell_h \subset C^\infty(\cM_h) \cap \tilde V^\ell,
    \qquad 0 \le \ell \le 3,
\end{equation}
with the following properties:
\begin{itemize}
    \item[(P1)] $\cM_h$ is finer than every mesh $\cX_i$ from \eqref{Xia}, in the sense that for all $1 \le i \le \beta_i$ it holds
    \begin{equation}
        C^\infty(\cX_i) \subseteq C^\infty(\cM_h) 
    \end{equation}
    moreover $\cM_h$ is compatible with every surface $\Sigma_i$
    in the sense of Definition~\ref{def:compsurf},
    \item[(P2)] each space $\tilde V^\ell_h$ is equipped with degrees of freedom 
    \begin{equation}
        \tilde \sigma^\ell_i: (C^\infty(\cM_h) \cap \tilde V^\ell) \to \RR, 
        \qquad 1 \le i \le \tilde N_\ell := \dim(\tilde V^\ell_h),
    \end{equation}
    satisfying relations of the form
    \begin{equation} \label{dof_compat}
        \tilde \sigma^{\ell+1}_i(\tilde d^\ell v) 
            = \sum_{j=1}^{\tilde N_{\ell}} \tilde{\mathbb{D}}^\ell_{ij} \tilde \sigma^{\ell}_j(v)
            , \qquad 1 \le i \le \tilde N_{\ell+1},
    \end{equation}
    for all $v \in C^\infty(\cM_h) \cap \tilde V^\ell$
    with $\tilde d^\ell v \in C^\infty(\cM_h) \cap \tilde V^{\ell+1}$,
    and some $\tilde N_{\ell+1} \times \tilde N_\ell$ matrices $\tilde{\mathbb{D}}^\ell$.
\end{itemize}
The matrices $\tilde{\mathbb{D}}^\ell$ represent the differential operators $\tilde d^\ell$ at the discrete level,
and we note that the relations \eqref{dof_compat} 
imply that the corresponding projection operators
$\tilde \Pi^\ell: (C^\infty(\cM_h) \cap \tilde V^\ell) \to \tilde V^\ell_h$, 
defined by 
\begin{equation}
    \label{Pi_h}
    \tilde \sigma^\ell_i(\tilde \Pi^\ell v) 
    = \tilde \sigma^\ell_i(v),
    \qquad 1 \le i \le \tilde N_\ell,
\end{equation}
commute with the differential operators. Further, we assume that
\begin{itemize}
    \item[(P3)] 
    the projection operators 
    preserve the homogenous boundary conditions, 
    in the sense that
    \begin{equation} \label{Pi_bc}
        \tilde \Pi^\ell(C^\infty(\cM_h) \cap \tilde V^\ell_{0}) 
        = \tilde V^\ell_h \cap \tilde V^\ell_{0} =: \tilde V^\ell_{h,0}
    \end{equation}    

    \item[(P4)] 
    the line integrals over the reciprocal surface boundaries $\partial\Sigma_{i}$, $1 \le i \le \beta_1$, can be expressed as linear combinations of degrees of freedom in $\tilde V^1_h$.
\end{itemize}

We observe that in practice Property (P1) is not very restrictive
since in most cases one can subdivide the surfaces $\Sigma_i$ into smaller pieces to make them compatible with a fine mesh.
In the case of Whitney, Nédélec or more general FEEC finite elements, the usual degrees of freedom satisfy property (P2) and (P3). Moreover the curl-conforming degrees of freedom 
$\tilde \sigma^1_i$ involve line integrals along edges of the mesh, so that property (P4) holds as long as one can build the reciprocal surfaces $\Sigma_{i}$ by assembling some faces of the mesh $\cM_h$
(which readily makes them compatible).

It follows from properties (P2) and (P3) that the same projection operators yield two commuting diagrams: one for the dual complex 
\eqref{dR_tV0} (with homogeneous boundary conditions)
\begin{equation}
    \label{cd_tVh0}
    \begin{tikzpicture}[ampersand replacement=\&, baseline] 
    \matrix (m) [matrix of math nodes,row sep=3em,column sep=3em,minimum width=2em] {
        ~~ C^\infty(\cM_h) \cap \tilde V^3_0 ~ \bbb
            \& ~~ C^\infty(\cM_h) \cap \tilde V^2_0  ~ \bbb
                \& ~~ C^\infty(\cM_h) \cap \tilde V^1_0  ~ \bbb
                    \& ~~ C^\infty(\cM_h) \cap \tilde V^0_0  ~ \bbb
    \\
        ~~ \tilde V^3_{h,0} ~ \bbb
            \& ~~ \tilde V^2_{h,0} ~ \bbb
                \& ~~ \tilde V^1_{h,0} ~ \bbb
                    \& ~~ \tilde V^0_{h,0} ~ \bbb
    \\
    };
    \path[-stealth]
    (m-1-2) edge node [above] {$\Div$}   (m-1-1)
    (m-1-3) edge node [above] {$\bcurl$} (m-1-2)
    (m-1-4) edge node [above] {$\bgrad$} (m-1-3)
    (m-2-2) edge node [above] {$\Div$}   (m-2-1)
    (m-2-3) edge node [above] {$\bcurl$}  (m-2-2)
    (m-2-4) edge node [above] {$\bgrad$} (m-2-3)
    (m-1-1) edge node [right] {$~\tilde \Pi^3$} (m-2-1)
    (m-1-2) edge node [right] {$~\tilde \Pi^2$} (m-2-2)
    (m-1-3) edge node [right] {$~\tilde \Pi^1$} (m-2-3)
    (m-1-4) edge node [right] {$~\tilde \Pi^0$} (m-2-4)
    ;
    \end{tikzpicture}
\end{equation}
and another one for the extended dual complex \eqref{dR_tV},
\begin{equation}
    \label{cd_tVh}
    \begin{tikzpicture}[ampersand replacement=\&, baseline] 
    \matrix (m) [matrix of math nodes,row sep=3em,column sep=3em,minimum width=2em] {
        ~~ C^\infty(\cM_h) \cap \tilde V^3 ~ \bbb
            \& ~~ C^\infty(\cM_h) \cap \tilde V^2  ~ \bbb
                \& ~~ C^\infty(\cM_h) \cap \tilde V^1  ~ \bbb
                    \& ~~ C^\infty(\cM_h) \cap \tilde V^0  ~ \bbb
    \\
        ~~ \tilde V^3_h ~ \bbb
            \& ~~ \tilde V^2_h ~ \bbb
                \& ~~ \tilde V^1_h ~ \bbb
                    \& ~~ \tilde V^0_h ~ . \bbb
    \\
    };
    \path[-stealth]
    (m-1-2) edge node [above] {$\Div$}   (m-1-1)
    (m-1-3) edge node [above] {$\bcurl$} (m-1-2)
    (m-1-4) edge node [above] {$\bgrad$} (m-1-3)
    (m-2-2) edge node [above] {$\Div$}   (m-2-1)
    (m-2-3) edge node [above] {$\bcurl$}  (m-2-2)
    (m-2-4) edge node [above] {$\bgrad$} (m-2-3)
    (m-1-1) edge node [right] {$~\tilde \Pi^3$} (m-2-1)
    (m-1-2) edge node [right] {$~\tilde \Pi^2$} (m-2-2)
    (m-1-3) edge node [right] {$~\tilde \Pi^1$} (m-2-3)
    (m-1-4) edge node [right] {$~\tilde \Pi^0$} (m-2-4)
    ;
    \end{tikzpicture}
\end{equation}

Our assumptions also imply that the projection $\tilde \Pi^1$ interpolates the line integrals along the boundaries $\partial \Sigma_i$, $1 \le i \le \beta_1$: one has indeed 
\begin{equation*} 
    \int_{\partial \Sigma_{i}} \btau \cdot \tilde \Pi^1 \ba
    = \sum_{j=1}^{\tilde N_1} c_{i,j} \tilde \sigma^1_j(\tilde \Pi^1 \ba)
    = \sum_{j=1}^{\tilde N_1} c_{i,j} \tilde \sigma^1_j(\ba)
    = \int_{\partial \Sigma_{i}} \btau \cdot \ba,
    \qquad
    \ba \in C^\infty(\cM_h) \cap \tilde V^1,
\end{equation*}
where the coefficients $c_{i,j}$ correspond to the decomposition of
the line integrals in terms of degrees of freedom.
In particular, Stokes' formula \eqref{Stokes_S} yields 
\begin{equation} \label{flux_Pia}
    \int_{\Sigma_{i}} \bn \cdot \bcurl \tilde \Pi^1 \ba
    = \int_{\partial \Sigma_{i}} \btau \cdot \tilde \Pi^1 \ba
    = \int_{\partial \Sigma_{i}} \btau \cdot \ba
    , \qquad
    \ba \in C^\infty(\cM_h) \cap \tilde V^1.
\end{equation}

\subsection{Discrete harmonic spaces}
\label{sec:discharm}

A discretization of the continuous harmonic spaces \eqref{frH} is 
classically provided by the harmonic spaces of the discrete complex~\cite{AFW_2006}. Since the primal and dual complexes have the same harmonic spaces at the continuous level, one may use the dual discrete complex \eqref{cd_tVh0}: this leads to the discrete vector-valued spaces
\begin{equation}
    \label{tilde_Hh}
    \tilde \frH^2_{h,0} 
        := \tilde \frZ^2_{h,0} \cap (\bcurl \tilde V^1_{h,0})^\perp,
        \qquad 
    \tilde \frH^1_{h,0} 
        := \tilde \frZ^1_{h,0} \cap (\bgrad \tilde V^0_{h,0})^\perp,
\end{equation}
where the discrete kernels are defined as usual,
\begin{equation}
    \label{tilde_Zh}
    \tilde \frZ^2_{h,0}
    := \{ \bw \in \tilde V^2_{h,0} : \Div \bw = 0 \},
    \qquad
    \tilde \frZ^1_{h,0}
    := \{ \bv \in \tilde V^1_{h,0} : \bcurl \bv = 0 \}.
\end{equation}

Given the relabeling $\tilde V^\ell_0 = V^*_{3-\ell}$,
we note that the spaces \eqref{tilde_Hh} provide a discretization for the 
continuous harmonic spaces $\frH^1$ and $\frH^2$ respectively,
so that we may also denote them as
\begin{equation*}
    \frH^1_{h} := 
    \tilde \frH^2_{h,0},
        \qquad 
    \frH^2_{h} := 
    \tilde \frH^1_{h,0}.
\end{equation*}

\begin{remark}
    The choice made here of discretizing the tangent harmonic spaces
    $\frH^1_{h}$ with 2-forms is consistent with the use of (strong) vector potentials discretized as 1-forms.
    A different choice is made in \cite{AR_construction_2013}, where the 
    harmonic fields of $\frH^1$ are represented 
    by discrete \textit{loop fields}, which are discrete 1-forms with nonzero circulations along 1-cycles that correspond to our tunnel curves.
\end{remark}

We assume from now on that these discrete harmonic spaces have the same dimensions as their continuous counterparts, namely
\begin{equation} \label{as:dimHh}
    \dim(\frH^1_{h}) = \beta_1,
    \qquad 
    \dim(\frH^2_{h}) = \beta_2.
\end{equation}
This property holds for Whitney, Nedelec and general FEEC spaces 
\cite[Sec.~5.5]{AFW_2006}.
Finally, we can verify that discrete potentials may a priori be used for the 
discrete harmonic fields. 
\begin{proposition}
The following inclusions hold under the above assumptions:
\begin{equation}
    \label{frH2h_sub_grad}
    \tilde \frH^1_{h,0} \subseteq \big(\tilde V^1_h \cap H_0(\bcurl 0;\Omega)\big) \subseteq \bgrad \tilde V^0_h,
\end{equation}
and 
\begin{equation}
    \label{frH1h_sub_curl}
    \tilde \frH^2_{h,0} \subseteq \big(\tilde V^2_h \cap H_0(\Div 0;\Omega)\big) \subseteq \bcurl \tilde V^1_h.
\end{equation}
\end{proposition}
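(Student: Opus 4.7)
My plan is to treat the two chain-inclusions in parallel. The leftmost inclusions will follow immediately from the definitions: any $\bv_h \in \tilde \frH^1_{h,0}$ lies in $\tilde \frZ^1_{h,0} \subseteq \tilde V^1_{h,0} \subseteq \tilde V^1_h \cap H_0(\bcurl;\Omega)$ and satisfies $\bcurl \bv_h = 0$ by \eqref{tilde_Zh}, so $\bv_h \in \tilde V^1_h \cap H_0(\bcurl 0;\Omega)$; the same reasoning applied in the other complex gives the leftmost inclusion of \eqref{frH1h_sub_curl}.

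For the rightmost inclusions my plan is to combine the continuous inclusions \eqref{H0c0_sub_grad}--\eqref{H0d0_sub_curl} with the commuting projection diagram \eqref{cd_tVh}. Given $\bv_h \in \tilde V^1_h \cap H_0(\bcurl 0;\Omega)$, the first step is to invoke \eqref{H0c0_sub_grad} to obtain a scalar potential $\phi \in H^1(\Omega)$ with $\bv_h = \bgrad \phi$. Once $\phi$ is shown to lie in the domain $C^0(\cM_h) \cap \tilde V^0$ of the projector $\tilde \Pi^0$, the commuting diagram will yield
\begin{equation*}
\bgrad \tilde \Pi^0 \phi = \tilde \Pi^1 \bgrad \phi = \tilde \Pi^1 \bv_h = \bv_h,
\end{equation*}
where the last equality uses that $\tilde \Pi^1$ is a projection onto $\tilde V^1_h$ and $\bv_h \in \tilde V^1_h$. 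This will exhibit $\bv_h$ as the gradient of $\tilde \Pi^0 \phi \in \tilde V^0_h$. I will treat $\bu_h \in \tilde V^2_h \cap H_0(\Div 0;\Omega)$ analogously: \eqref{H0d0_sub_curl} supplies some $\bA \in H(\bcurl;\Omega)$ with $\bcurl \bA = \bu_h$, and once regularity is secured the commutation $\bcurl \tilde \Pi^1 \bA = \tilde \Pi^2 \bcurl \bA = \bu_h$ will conclude.

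The main obstacle will be securing the piecewise regularity of the continuous potentials so that they enter the projectors' domains. For the scalar case, since $\bv_h \in \tilde V^1_h \subseteq C^0(\cM_h)$ is piecewise smooth, a cell-by-cell integration should show that $\phi$ is smooth on each cell of $\cM_h$; the $H^1$ condition combined with the tangential continuity of $\bv_h$ across mesh interfaces will force matching of the smooth traces, so that $\phi$ can be taken in $C^0(\cM_h) \cap \tilde V^0$. The vector case is more delicate, since $\bA$ is determined by $\bu_h$ only up to a gradient: I expect to arrange its piecewise smoothness either by a local Poincar\'e-lemma construction on each mesh cell followed by a gauge adjustment enforcing tangential continuity across interfaces, or by appealing to smoothed commuting projections (e.g.\ of Christiansen--Winther or Sch\"oberl type) which extend $\tilde \Pi^1$ to all of $H(\bcurl;\Omega)$ while preserving the commutation with $\bcurl$.
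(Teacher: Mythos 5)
Your overall strategy is the same as the paper's: the leftmost inclusions are definitional, and the rightmost ones follow by producing a continuous potential via \eqref{H0c0_sub_grad}--\eqref{H0d0_sub_curl}, checking it lies in the domain $C^0(\cM_h)\cap\tilde V^\ell$ of the canonical projector, and then invoking the commuting diagram \eqref{cd_tVh} together with $\tilde\Pi^\ell v_h = v_h$. The scalar case is handled correctly (the paper's version is even shorter: $\bgrad\phi=\bv_h$ is bounded, so $\phi$ is Lipschitz, hence continuous). The one place where your plan is not yet a proof is exactly the step you flag as delicate: the piecewise regularity of the vector potential $\bA$. Your first option (cell-by-cell Poincar\'e lemma plus a gauge adjustment enforcing tangential continuity across all interfaces) is a genuine \v{C}ech-type patching problem over the whole mesh skeleton and is considerably harder than the statement warrants; your second option (smoothed commuting projections \`a la Christiansen--Winther or Sch\"oberl) does work, but it imports machinery beyond the assumptions of Section~\ref{sec:fem}, which only posit dof-based projectors defined on $C^0(\cM_h)\cap\tilde V^\ell$. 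The paper avoids both difficulties by writing down an \emph{explicit} potential: extend $\bw_h$ by zero to a cube $C_R\supset\overline\Omega$ and set
\begin{equation*}
\bar\bA(\bx)=\Big(0,\ \int_{-R}^{x_1}\bar w_{h,3}(x',x_2,x_3)\,\rmd x',\ -\int_{-R}^{x_1}\bar w_{h,2}(x',x_2,x_3)\,\rmd x'\Big),
\end{equation*}
whose restriction to $\Omega$ is automatically in $C^0(\cM_h)\cap H(\bcurl;\Omega)$ because $\bw_h$ is bounded and piecewise continuous. I would recommend replacing your gauge-patching sketch by this (or a similar) explicit line-integral construction; as it stands, that step of your argument is a plan rather than a proof.
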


\begin{proof}
    Let $\bv_h \in \tilde \frH^1_{h,0}$. We have 
    $\bv_h \in H_0(\bcurl 0;\Omega)$ by construction 
    and the argument used to show \eqref{H0c0_sub_grad}
    yields $\bv_h = \bgrad \phi$ for some 
    $\phi \in H^1(\Omega)$. Since $\bv_h$ is piecewise smooth (and bounded), we observe that $\phi$ is also piecewise smooth, in particular we may apply the commuting projecion operators \eqref{cd_tVh}. This yields 
    \begin{equation*}
    \bv_h = \tilde \Pi^1 \bv_h = \tilde \Pi^1 \bgrad \phi = \bgrad \tilde \Pi^0 \phi
    \end{equation*}
    which shows \eqref{frH2h_sub_grad}. 
    The embeddings \eqref{frH1h_sub_curl} are proven in a similar way:
    For $\bw_h \in \tilde \frH^2_{h,0}$ we have again
    $\bw_h \in H_0(\Div 0;\Omega)$ by construction 
    and the argument used to show \eqref{H0d0_sub_curl}
    yields $\bw_h = \bcurl \bA$ for a vector potential 
    $\bA \in H(\bcurl;\Omega)$ defined as the restriction of some $\bar \bA$
    on $\Omega$. Here we may consider an explicit potential 
    defined on a cube $C_R = (-R,R)^3$ containing $\overline \Omega$, 
    \begin{equation*}
    \bar \bA(\bx) = \Big(0, 
        \int_{-R}^{x_1} \bar w_{h,3}(x', x_2, x_3) \rmd x', 
        -\int_{-R}^{x_1} \bar w_{h,2}(x', x_2, x_3) \rmd x' \Big),
    \end{equation*}
    where $\bar \bw_h$ is defined as the extension of $\bw_h$ by 0 on 
    $C_R \setminus \Omega$. 
    From $\bw_h \in C^\infty(\cM_h)$ we infer that 
    $\bA \in C^\infty(\cM_h) \cap H(\bcurl;\Omega)$, so that we may again apply the 
    commuting projection operators \eqref{cd_tVh}. This yields
    \begin{equation*}
    \bw_h = \tilde \Pi^2 \bw_h = \tilde \Pi^2 \bcurl \bA = \bcurl \tilde \Pi^1 \bA
    \end{equation*}
    which proves \eqref{frH1h_sub_curl}.
\end{proof}

\subsection{Discrete harmonic scalar potentials for $\frH^2_h = \tilde{\frH}^1_{h,0}$}

To discretize the harmonic scalar potentials 
from Section~\ref{sec:scapot}, we can perform a simple Galerkin projection
on the natural discrete subspace of $H^1_S$.
Specifically, we denote
\begin{equation} \label{tV0hc}
    \tilde V^0_{h,S} := H^1_{S}(\Omega) \cap \tilde V^0_{h}
    = \{ \vp \in \tilde V^0_h : \vp|_{S_0} = 0 ~ \text{ and } ~ \vp|_{S_i} = \text{ constant}, ~ 1 \le i \le \beta_2 \},
\end{equation}
and for each $1 \le i \le \beta_2$, we let $\phi_{h,i} \in \tilde V^0_{h,S}$ 
solve the discrete Poisson problem
\begin{equation}
    \label{phi0h_eq}
    \sprod{\bgrad \vp}{\bgrad \phi_{h,i}} = \vp|_{S_i}
        \qquad \forall \vp \in \tilde V^0_{h,S}.
\end{equation}

\begin{theorem} \label{th:H2h_basis}
    The fields $\bv_{h,i} := \bgrad \phi_{h,i}$,
    $i = 1, \dots \beta_2$, form a basis of $\frH^2_h = \tilde \frH^1_{h,0}$.
    Moreover, they satisfy
    \begin{equation}
        \label{prods_vi}
        \sprod{\bgrad \psi_{h,j}}{\bv_{h,i}} = \delta_{i,j}, 
        \quad 1 \le i,j \le \beta_2.
    \end{equation}
    with functions $\psi_{h,j} \in \tilde V^0_{h,S}$ 
    chosen as in Proposition~\ref{prop:psi_h} below.
\end{theorem}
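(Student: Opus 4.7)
The plan is to mirror the proof of Proposition~\ref{prop:gradbasis}, adapting each step to the discrete setting. The key technical ingredient, which replaces the continuous lifting $\psi_j$ constant near $\partial \Omega$, is a discrete analogue $\psi_{h,j} \in \tilde V^0_{h,S}$ whose gradient lies in $\tilde V^1_{h,0}$ (i.e.\ has vanishing tangential trace in the finite element sense); I will take the existence and satisfaction of $\psi_{h,j}|_{S_l} = \delta_{j,l}$ as given by the forthcoming Proposition~\ref{prop:psi_h}.

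\textbf{Step 1: discrete inclusion $\bgrad \tilde V^0_{h,S} \subseteq \tilde V^1_{h,0}$.} For any $\vp \in \tilde V^0_{h,S}$, write
\begin{equation*}
    \vp = \Big(\vp - \sum_{j=1}^{\beta_2} \vp|_{S_j}\, \psi_{h,j}\Big) + \sum_{j=1}^{\beta_2} \vp|_{S_j}\, \psi_{h,j}.
\end{equation*}
The first term lies in $\tilde V^0_{h,0}$, so its gradient is in $\bgrad \tilde V^0_{h,0} \subset \tilde V^1_{h,0}$; the second is a combination of the $\psi_{h,j}$'s, whose gradients lie in $\tilde V^1_{h,0}$ by Proposition~\ref{prop:psi_h}. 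Applied to $\phi_{h,i} \in \tilde V^0_{h,S}$ this gives $\bv_{h,i} \in \tilde V^1_{h,0}$.

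\textbf{Step 2: $\bv_{h,i} \in \tilde \frH^1_{h,0}$.} The identity $\bcurl \bv_{h,i} = \bcurl \bgrad \phi_{h,i} = 0$ is immediate, so combined with Step~1 we have $\bv_{h,i} \in \tilde \frZ^1_{h,0}$. For $L^2$-orthogonality to $\bgrad \tilde V^0_{h,0}$, note that any $\tau_h \in \tilde V^0_{h,0}$ also belongs to $\tilde V^0_{h,S}$ with $\tau_h|_{S_i} = 0$, so taking $\vp = \tau_h$ in the discrete Poisson problem~\eqref{phi0h_eq} yields $\sprod{\bgrad \tau_h}{\bv_{h,i}} = 0$.

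\textbf{Step 3: duality relations~\eqref{prods_vi}.} Since $\psi_{h,j} \in \tilde V^0_{h,S}$ and $\psi_{h,j}|_{S_i} = \delta_{i,j}$, taking $\vp = \psi_{h,j}$ in~\eqref{phi0h_eq} gives directly
\begin{equation*}
    \sprod{\bgrad \psi_{h,j}}{\bv_{h,i}} = \sprod{\bgrad \psi_{h,j}}{\bgrad \phi_{h,i}} = \psi_{h,j}|_{S_i} = \delta_{i,j}.
\end{equation*}

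\textbf{Step 4: basis property.} Linear independence follows from~\eqref{prods_vi}: any relation $\sum_i c_i \bv_{h,i} = 0$ tested against $\bgrad \psi_{h,j}$ yields $c_j = 0$. Combined with the dimension assumption $\dim(\frH^2_h) = \beta_2$ from~\eqref{as:dimHh}, the $\beta_2$ linearly independent fields $\bv_{h,i} \in \frH^2_h$ form a basis.

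The main obstacle is Step~1, i.e.\ ensuring that the gradient of a function in $\tilde V^0_{h,S}$ actually sits inside the discrete $H_0(\bcurl)$ space $\tilde V^1_{h,0}$; this is the genuinely discrete content (the continuous counterpart uses a smooth cutoff near $\partial \Omega$, which is not available for finite element functions) and is delegated to Proposition~\ref{prop:psi_h}, which presumably exhibits explicit nodal $\psi_{h,j}$'s that are constant on each $S_l$ and whose commuting-projection values make $\bgrad \psi_{h,j}$ vanish in tangential trace. All remaining steps are routine reformulations of the arguments in Proposition~\ref{prop:gradbasis}.
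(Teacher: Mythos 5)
Your proof follows the same route as the paper's (which is very terse: membership in $\tilde \frH^1_{h,0}$ ``by the same arguments as in Proposition~\ref{prop:gradbasis}'', then linear independence by testing \eqref{phi0h_eq} with $\vp = \psi_{h,j}$), and Steps 2--4 are correct and complete. The one place where you go astray is Step 1. You delegate the key inclusion $\bgrad \psi_{h,j} \in \tilde V^1_{h,0}$ to Proposition~\ref{prop:psi_h}, but that proposition only asserts the existence of $\psi_{h,i} \in \tilde V^0_{h,S}$ with $\psi_{h,i}|_{S_j} = \delta_{i,j}$ and the direct-sum decomposition \eqref{tV0hc_dec}; it says nothing about tangential traces of gradients, and your guess that it ``exhibits explicit nodal $\psi_{h,j}$'s \dots whose commuting-projection values make $\bgrad\psi_{h,j}$ vanish in tangential trace'' is not what it does. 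As written, Step 1 therefore rests on an unproved claim, and your decomposition of $\vp$ is moreover circular in spirit: establishing $\bgrad\psi_{h,j} \in \tilde V^1_{h,0}$ requires exactly the same kind of argument as the general statement.

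The repair is immediate, and shows that your perceived ``main obstacle'' is not one. Since $\tilde V^0_{h,S} = H^1_S(\Omega) \cap \tilde V^0_h$ by definition \eqref{tV0hc}, any $\vp \in \tilde V^0_{h,S}$ lies in $H^1_S(\Omega)$, so the continuous inclusion \eqref{gradH1c} gives $\bgrad\vp \in H_0(\bcurl;\Omega) = \tilde V^1_0$ directly --- the smooth cutoffs $\psi_j$ of Proposition~\ref{prop:gradbasis} are applied to the continuous function $\vp$ and need not themselves be finite element functions, contrary to what your closing paragraph suggests. On the other hand $\bgrad\vp \in \bgrad\tilde V^0_h \subseteq \tilde V^1_h$ by the subcomplex property \eqref{dR_tVh}, whence $\bgrad\vp \in \tilde V^1_h \cap \tilde V^1_0 = \tilde V^1_{h,0}$. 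This gives $\bgrad \tilde V^0_{h,S} \subseteq \tilde V^1_{h,0}$ in one line, with no appeal to Proposition~\ref{prop:psi_h} (which is only needed, as in your Steps 3--4, to supply the test functions $\psi_{h,j}$ for the duality relations \eqref{prods_vi}). With this substitution your argument matches the paper's intended proof.
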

Since Problem~\eqref{phi0h_eq} is clearly well-posed, the main difficulty
is to verify that the linear forms in the right-hand side are linearly
independent. An answer is provided by the following result.

\begin{proposition} \label{prop:psi_h}
The space \eqref{tV0hc} admits a direct (a priori non orthogonal) decomposition 
\begin{equation}
    \label{tV0hc_dec}
    \tilde V^0_{h,S} = \tilde V^0_{h,0} \oplus 
        \bigoplus_{i = 1}^{\beta_2} \Span(\psi_{h,i})
\end{equation}
with functions $\psi_{h,i} \in \tilde V^0_{h,S}$ that satisfy
$\psi_{h,i}|_{S_j} = \delta_{i,j}$ for $1 \le j \le \beta_2$.
\end{proposition}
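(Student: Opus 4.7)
The plan is to construct the $\psi_{h,i}$ as discrete projections of the smooth cutoffs introduced in \eqref{psi_i}, and to establish the direct decomposition by elementary boundary evaluation.

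First, I would take the smooth functions $\psi_i \in H^1_S(\Omega)$ from \eqref{psi_i}, which are constant in a (disconnected) neighborhood of $\partial \Omega$ with values $\psi_i|_{S_l} = \delta_{i,l}$ for $0 \le l \le \beta_2$, and set $\psi_{h,i} := \tilde \Pi^0 \psi_i \in \tilde V^0_h$. For each boundary component $S_l$, the function $\psi_i - \delta_{i,l}$ vanishes on a one-sided neighborhood of $S_l$, so invoking the boundary-preserving property \eqref{Pi_bc} in a localized form (applied on that neighborhood), together with the reproduction of constants by $\tilde \Pi^0$, one obtains $\psi_{h,i}|_{S_l} = \delta_{i,l}$. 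This yields $\psi_{h,i} \in \tilde V^0_{h,S}$ with the required boundary values.

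Next, given any $\vp \in \tilde V^0_{h,S}$, I would set $c_j := \vp|_{S_j}$ for $1 \le j \le \beta_2$ (well-defined constants by definition of $\tilde V^0_{h,S}$) and define
\begin{equation*}
    \vp_0 := \vp - \sum_{j=1}^{\beta_2} c_j \psi_{h,j}.
\end{equation*}
A direct evaluation on each component gives $\vp_0|_{S_0} = 0$ and $\vp_0|_{S_l} = c_l - \sum_{j} c_j \delta_{j,l} = 0$ for $1 \le l \le \beta_2$, so $\vp_0 \in \tilde V^0_{h,0}$. Uniqueness follows similarly: evaluating the identity $\vp_0 + \sum_{j} c_j \psi_{h,j} = 0$ on each $S_l$ forces $c_l = 0$, and hence $\vp_0 = 0$, so the sum is indeed direct.

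The main obstacle is the first step: showing that $\tilde \Pi^0$ preserves the component-wise constant boundary values. While \eqref{Pi_bc} alone only asserts preservation of the fully homogeneous boundary condition, the finer local statement relies on the fact that the boundary degrees of freedom depend only on data near $\partial \Omega$ and reproduce constants, which is standard in the Whitney, Nédélec and general FEEC frameworks targeted in the paper. An alternative, when these frameworks are specialized to nodal-type boundary DOFs, is to construct $\psi_{h,i}$ directly at the degree-of-freedom level by setting all boundary DOFs on $S_i$ to one and all others on $S_l$, $l\neq i$, to zero, with arbitrary interior values.
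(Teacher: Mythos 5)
Your construction of the $\psi_{h,i}$ as $\tilde \Pi^0 \psi_i$ rests on a property that the paper's framework does not provide: a \emph{localized} version of \eqref{Pi_bc}. The function $\psi_i - \delta_{i,l}$ vanishes only near $S_l$, not near all of $\partial\Omega$, so it is not in $C^0(\cM_h)\cap\tilde V^0_0$ and \eqref{Pi_bc} as stated says nothing about its trace on $S_l$ after projection. To conclude you need (a) that the degrees of freedom controlling the trace on $S_l$ depend only on data near $S_l$, and (b) that $\tilde\Pi^0$ reproduces constants (i.e., that constants lie in $\tilde V^0_h$). Neither is among the properties listed in Section~\ref{sec:fem}, which deliberately assumes only the commuting relation \eqref{dof_compat} and the global boundary-preservation \eqref{Pi_bc}. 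You correctly flag this as ``the main obstacle,'' but you do not close it, and your fallback (nodal-type boundary DOFs set directly) likewise imports extra structure. So within the abstract setting in which the proposition is stated, the existence of the $\psi_{h,i}$ --- which is the entire content of the proposition, since the direct-sum verification you give afterwards is elementary and correct --- remains unproven. For the concrete Whitney/N\'ed\'elec/FEEC families your argument would go through, but then the proposition would hold only for those families rather than under the paper's hypotheses.

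The paper takes a different and assumption-free route: it proves surjectivity of the trace map $\cT:\tilde V^0_{h,S}\to\RR^{\beta_2}$, $\vp\mapsto(\vp|_{S_i})_i$, by dimension counting. Since $\ker\cT=\tilde V^0_{h,0}$, one has $\dim(\tilde V^0_{h,S})\le\dim(\tilde V^0_{h,0})+\beta_2$; the reverse inequality comes from the map $\cG:\vp\mapsto(I-P_{\frB})\bgrad\vp$, which is surjective onto $\tilde\frH^1_{h,0}$ by the refined inclusion \eqref{frH2h_sub_grad_c}, combined with the dimension assumption \eqref{as:dimHh}. The existence of the $\psi_{h,i}$ is then exactly the surjectivity of $\cT$. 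If you want to salvage your approach, you would need to either add the locality of the boundary DOFs (and membership of constants in $\tilde V^0_h$) as explicit hypotheses, or switch to the cohomological dimension count, which uses only what the paper already assumes.
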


\begin{proof}
    Consider the mapping 
    $\cT: \tilde V^0_{h,S} \to \RR^{\beta_2}$,
    $\vp \mapsto (\vp|_{S_i})_{1 \le i \le \beta_2}$.
    Since $\ker \cT = \tilde V^0_{h,0}$, one has
    \begin{equation} \label{rank-T}
        \dim(\tilde V^0_{h,S})
        \le \dim(\tilde V^0_{h,0}) + \beta_2.
    \end{equation}
    Next, observe that any potential $\phi$ 
    of some $\bv \in H_0(\bcurl 0)$ must be constant on each
    surface $S_i$. In particular, the inclusion \eqref{frH2h_sub_grad}
    may be refined into
    \begin{equation}
    \label{frH2h_sub_grad_c}
    \tilde \frH^1_{h,0}  \subseteq \bgrad \tilde V^0_{h,S}.
    \end{equation}
    We then consider the mapping
    \begin{equation*}
        \cG: \tilde V^0_{h,S} \to \tilde \frH^1_{h,0}, 
        \quad \vp \mapsto (I-P_{\frB}) \bgrad \vp
    \end{equation*}
    where $P_{\frB}$ is the orthogonal projection 
    onto $\tilde \frB^1_{h,0} := \bgrad \tilde V^0_{h,0}$.
    One clearly has $\tilde V^0_{h,0} \subseteq \ker \cG$,
    and it follows from 
    \eqref{frH2h_sub_grad_c} 
    and from the orthogonality $\tilde \frB^1_{h,0} \perp \tilde \frH^1_{h,0}$
    that $\cG$ is surjective.
    In particular,
    \begin{equation} \label{rank-G}
        \dim(\tilde V^0_{h,S}) 
        = \dim(\ker \cG) + \dim(\tilde \frH^1_{h,0})
        \ge \dim(\tilde V^0_{h,0}) + \beta_2
    \end{equation}
    where we have used \eqref{as:dimHh} in the last step.
    By combining the bounds \eqref{rank-T} and \eqref{rank-G} one finds that 
    $\dim(\tilde V^0_{h,S}) = \dim(\tilde V^0_{h,0}) + \beta_2$,
    in particular $\cT$ is surjective which proves the desired result.
\end{proof}

\begin{proof}[Proof of Th.~\ref{th:H2h_basis}]
One easily verifies that the fields $\bv_{h,i} := \bgrad \phi_{h,i}$ 
all belong to $\tilde \frH^1_{h,0}$ 
by using the same arguments as in Proposition~\ref{prop:gradbasis}.
To see that they are linearly independent, and hence 
that they span $\tilde \frH^2_{h}$, it suffices to 
use $\vp = \psi_{h,j}$ in \eqref{phi0h_eq}. This also shows \eqref{prods_vi}.
\end{proof}

\subsection{Discrete harmonic vector potentials for $\frH^1_h = \tilde{\frH}^2_{h,0}$}

The construction of discrete potentials follows the same steps
as in the continuous case: we define them as a sum
\begin{equation}
    \label{Ah_dec}
    \bA_{h,i} := \bA^b_{h,i} + \bA^0_{h,i} \in \tilde V^1_h,  \qquad 1 \le i \le \beta_1,
\end{equation}
where the first term is a projection of the lifted boundary potential \eqref{Abi},
\begin{equation}
    \label{Abhi}
    \bA^b_{h,i} := \tilde \Pi^1 \bA^b_i,
\end{equation}
and the second term approximates the correction potential 
\eqref{A0_sys} by solving the corresponding discrete problem:

Find $(\sigma_{h,i},\bA^0_{h,i},\bp_{h,i})\in \tilde V^0_{h,0} \times \tilde V^1_{h,0} \times \tilde \frH^1_{h,0}$, such that 
\begin{equation}
    \label{A0h_sys}
    \left\{\begin{aligned}
        \sprod{\sigma_{h,i}}{\tau} + \sprod{\bgrad\tau}{ \bA_{h,i}^0} &= 0,\ 
        &&\quad\forall\ \tau \in \tilde V^0_{h,0}, 
        \\
        \sprod{\bgrad\sigma_{h,i}}{\bv} + \sprod{\bcurl\bA_{h,i}^0}{ \bcurl\bv} 
        + \sprod{\bp_{h,i}}{ \bv} &= -\sprod{\bcurl\bA_{h,i}^b}{\bcurl\bv},\ 
        &&\quad\forall\ \bv\in \tilde V^1_{h,0}, 
        \\
        \sprod{\bA_{h,i}^0}{\bq} &= 0,\ &&\quad\forall\ \bq\in \tilde \frH^1_{h,0}.
    \end{aligned}\right.
\end{equation}
The well-posedness of this problem follows from the existence
of uniform discrete Poincaré inequalities and bounded cochain projections,
which are known to exist in the case of FEEC discretizations~\cite{AFW_2006}.
 
We next observe that the flux functionals $\Phi_i$ from \eqref{Phi} are well-defined on $\tilde \frH^2_{h,0}$ since the latter is a subspace of $\tilde V^2_{0} = H_0(\Div;\Omega)$. We then have the following result.

\begin{theorem} \label{th:H1h_basis}
    The fields $\bw_{h,i} := \bcurl \bA_{h,i}$,
    $i = 1, \dots \beta_1$, form a basis of $\frH^1_h = \tilde \frH^2_{h,0}$.
    Moreover, their fluxes on the surfaces $\Sigma_j$ 
    satisfy
    \begin{equation}
        \label{Phi_curlAh}
        \Phi_j(\bw_{h,i}) = \int_{\Sigma_{j}} \bn \cdot \bw_{h,i} = \delta_{i,j}, \qquad 1 \le i,j \le \beta_1.
    \end{equation}    
\end{theorem}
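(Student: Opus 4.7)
The plan is to mirror the continuous argument of Proposition~\ref{prop:w_basis} and Theorem~\ref{th:H1_basis} in the discrete setting. I would proceed in three steps: (i) show that each $\bw_{h,i}$ lies in $\tilde \frH^2_{h,0} = \frH^1_h$; (ii) evaluate the fluxes across the discrete surfaces $\Sigma_{h,j}$ and obtain the duality relation~\eqref{Phi_curlAh}; (iii) deduce the basis property using the dimension assumption~\eqref{as:dimHh}.

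For step (i), the divergence-free property $\Div \bw_{h,i} = 0$ is automatic from the subcomplex structure of~\eqref{dR_tVh}. To see that $\bw_{h,i} \in \tilde V^2_{h,0}$, I would split $\bw_{h,i} = \bcurl \bA^0_{h,i} + \bcurl \bA^b_{h,i}$: the first term lies in $\bcurl \tilde V^1_{h,0} \subseteq \tilde V^2_{h,0}$, while for the second I would use the commuting diagram~\eqref{cd_tVh} to write $\bcurl \bA^b_{h,i} = \bcurl \tilde\Pi^1 \bA^b_i = \tilde\Pi^2 \bcurl \bA^b_i$ and then invoke the boundary-preservation property~\eqref{Pi_bc}, since the first condition in~\eqref{Ab_bc} places $\bcurl \bA^b_i$ in $H_0(\Div;\Omega) = \tilde V^2_0$. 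The harmonic orthogonality $\bw_{h,i} \perp \bcurl \tilde V^1_{h,0}$ then follows directly from the second equation of~\eqref{A0h_sys} once the Lagrange multipliers $\sigma_{h,i}$ and $\bp_{h,i}$ are shown to vanish. For this I would test the second equation with $\bv = \bgrad \sigma_{h,i}$ (using $\bcurl \bgrad = 0$ and the orthogonality $\bp_{h,i} \perp \bgrad \tilde V^0_{h,0}$ built into $\tilde \frH^1_{h,0}$) to conclude $\bgrad \sigma_{h,i} = 0$, hence $\sigma_{h,i} = 0$; then testing with $\bv = \bp_{h,i}$ and using $\bcurl \bp_{h,i} = 0$ yields $\bp_{h,i} = 0$.

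For step (ii), I would again split into the two components. The correction contribution $\bcurl \bA^0_{h,i}$ has zero flux on every $\Sigma_{h,j}$ by~\eqref{flux_a0}, and~\eqref{flux_Pia} applied to $\bA^b_i$ converts the lifted-boundary contribution into the line integral $\int_{\partial \Sigma_{h,j}} \btau \cdot \bA^b_i$. Since $\bA^b_i$ is supported in $X_i$ by construction~\eqref{Abi}, this line integral localizes to $X_i \cap \partial \Sigma_{h,j}$, which by the discrete reciprocity~\eqref{XcapdS} equals $X_i \cap \partial \Sigma_j$; extending back by zero identifies this with $\int_{\partial \Sigma_j} \btau \cdot \bA^b_i = \delta_{i,j}$ via~\eqref{Ab_bc}.

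Step (iii) is then immediate: the duality~\eqref{Phi_curlAh} yields linear independence of the $\bw_{h,i}$ in $\frH^1_h$, and the assumed dimension $\dim \frH^1_h = \beta_1$ from~\eqref{as:dimHh} upgrades this to a basis. The main subtlety I foresee is justifying that the piecewise smooth potential $\bA^b_i$ (and its curl) lie in the domain $C^0(\cM_h) \cap \tilde V^\ell$ of the commuting projections, which implicitly requires the mesh $\cM_h$ to resolve the piecewise structure of the parametrizations $T_i$ introduced in Assumption~\ref{as:Gamma}.
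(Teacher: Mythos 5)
Your proposal is correct and follows essentially the same route as the paper's proof: the same splitting $\bw_{h,i} = \bcurl\bA^b_{h,i} + \bcurl\bA^0_{h,i}$, the same test functions $\bv = \bgrad\sigma_{h,i}$ and $\bv = \bp_{h,i}$ to kill the multipliers, the same use of the commuting diagram~\eqref{cd_tVh} with~\eqref{Pi_bc} and~\eqref{Ab_bc} for the boundary condition, the same flux computation via~\eqref{flux_a0}, \eqref{flux_Pia} and~\eqref{XcapdS}, and the same dimension count from~\eqref{as:dimHh}. Your closing remark about the mesh needing to resolve the piecewise structure of $T_i$ so that $\bA^b_i \in C^0(\cM_h)\cap\tilde V^1$ is a legitimate implicit hypothesis that the paper also leaves tacit.
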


\begin{proof}
From \eqref{Ah_dec}, 
it is clear that $\bw_{h,i} = \bcurl\bA_{h,i} \in \tilde V^2_h$ 
satisfies $\Div \bw_{h,i} = 0$.
As for the boundary condition $\bn \cdot \bw_{h,i} = 0$ on $\partial \Omega$, it is verified by writing 
\begin{equation*}
    \bw_{h,i} = \bcurl \bA^b_{h,i} + \bcurl \bA^0_{h,i}
    = \tilde \Pi^2 \bcurl \bA^b_{i} + \bcurl \bA^0_{h,i}
\end{equation*}
where we have used \eqref{Abhi} and the commutation property \eqref{cd_tVh}
in the second equality.
Here both term satisfies the boundary condition: 
the first one thanks to \eqref{Ab_bc} and \eqref{Pi_bc}, 
and the second one as the curl of a field in 
$\tilde V^1_0 = H_0(\bcurl;\Omega)$.

Thus, the fields $\bw_{h,i}$ are in $\tilde \frZ^2_{h,0}$.
To verify that they are in $\tilde \frH^2_{h,0}$ we must show that they also belong to $(\bcurl \tilde V^1_{h,0})^\perp$,
and for this we use the second equation from \eqref{A0h_sys}: taking first $\bv = \bgrad\sigma_{h,i}$ shows that $\sigma_{h,i} = 0$, then $\bv = \bp_{h,i}$ yields $\bp_{h,i} = 0$, so that one finally obtains
\begin{equation} \label{A0h_cc}
    \sprod{\bw_{h,i}}{\bcurl\bv} = \sprod{\bcurl(\bA^0_{h,i}+\bA^b_{h,i})}{\bcurl\bv} = 0,
        \quad\forall\ \bv\in \tilde V^1_{h,0}
\end{equation}
which shows that $\bw_{h,i} \in \tilde \frH^2_{h,0}$ indeed.

Finally, to evaluate the fluxes across a surface $\Sigma_{j}$ we use 
the fact that $\bA^0_{h,i} \in H_0(\bcurl;\Omega)$, which yields
$\Phi_j(\bcurl \bA^0_{h,i}) = 0$ according to Proposition~\ref{prop:Phi},
and \eqref{flux_Pia} with $\ba = \bA^b_{i}$: this gives
\begin{equation*}
        \Phi_j(\bw_{h,i}) = 
        \Phi_j(\bcurl \bA^b_{h,i})
        = \Phi_j(\bcurl \tilde \Pi^1 \bA^b_i)
        = \int_{\Sigma_{j}} \bn \cdot \bcurl \tilde \Pi^1 \bA^b_i
        = \int_{\partial \Sigma_{j}} \btau \cdot \bA^b_{i} 
        = \delta_{i,j}
\end{equation*}
where the last step is again \eqref{Ab_bc}.
This shows \eqref{Phi_curlAh} and the linear independence of the fields $\bw_{h,i}$. Their spanning properties follow from \eqref{as:dimHh}.
\end{proof}

\begin{remark} \label{rem:A0h_perpZ}
    In the proof above, we further note that the first and last equations from \eqref{A0h_sys} yield
    \begin{equation} \label{A0h_perpZ}
    \bA_{h,i}^0 
    \in (\bgrad \tilde V^0_{h,0} \poplus \tilde \frH^1_{h,0})^\perp
    = (\tilde \frZ^1_{h,0})^\perp
    \end{equation}
where we remind that $\tilde \frZ^1_{h,0}$ 
is the kernel of the curl in $\tilde V^1_{h,0}$,
see \eqref{tilde_Hh}--\eqref{tilde_Zh}.
\end{remark}

\subsection{Solving a simpler discrete curl-curl problem}

To conclude our study and answer a question raised during a discussion with our colleague Florian Hindenlang, let us examine a simplified version of problem~\eqref{A0h_sys}, where the discrete harmonic space $\tilde \frH^1_{h,0}$ is replaced by a space of the form
\begin{equation*}
    \bgrad \tilde V^0_{h,\psi} :=  \bgrad \Big(\bigoplus_{i = 1}^{\beta_2} \Span(\psi_{h,i})\Big)
\end{equation*}
associated with \textit{arbitrary} functions $\psi_{h,i} \in \tilde V^0_h$ satisfying $\psi_{h,i}|_{S_j} = \delta_{i,j}$ for $0 \le j \le \beta_2$. 
Note that the existence of such functions is established by Proposition~\ref{prop:psi_h}. Thus, the new problem reads:

Find $(\sigma^\psi_{h,i},\bA^{0,\psi}_{h,i},\bp^\psi_{h,i})\in \tilde V^0_{h,0} \times \tilde V^1_{h,0} \times \bgrad \tilde V^0_{h,\psi}$, such that
\begin{equation}
    \label{A0h_sys_psi}
    \left\{\begin{aligned}
        \sprod{\sigma^\psi_{h,i}}{ \tau} + \sprod{\bgrad\tau}{ \bA_{h,i}^{0,\psi}} &= 0,\ 
        &&\quad\forall\ \tau \in \tilde V^0_{h,0}, 
        \\
        \sprod{\bgrad\sigma^\psi_{h,i}}{ \bv} + \sprod{\bcurl\bA_{h,i}^{0,\psi}}{ \bcurl\bv}
        + \sprod{\bp^\psi_{h,i}}{ \bv} &= -\sprod{\bcurl\bA_{h,i}^b}{ \bcurl\bv},\ 
        &&\quad\forall\ \bv\in \tilde V^1_{h,0}, 
        \\
        \sprod{\bA_{h,i}^{0,\psi}}{\bq} &= 0,\ &&\quad\forall\ \bq\in \bgrad \tilde V^0_{h,\psi}.
    \end{aligned}\right.
\end{equation}
This problem may be of practical interest since it does not require to compute the discrete harmonic fields from $\tilde \frH^1_{h,0}$: the harmonic potentials from \eqref{phi0h_eq} may be replaced by other functions that satisfy the same piecewise constant boundary conditions but are not subject to specific constraints inside $\Omega$. In particular one may choose functions with localized supports close to the surfaces $S_i$. 
Given this freedom, there is little hope that problem~\eqref{A0h_sys_psi} satisfies any stability property as the mesh is refined. Nevertheless, 
the following result holds.

\begin{proposition} Problem~\eqref{A0h_sys_psi} admits a unique solution
    which coincides with that of problem~\eqref{A0h_sys}, in the sense that
    \begin{equation}
    (\sigma^\psi_{h,i}, \bA^{0,\psi}_{h,i}, \bp^\psi_{h,i})
    = (\sigma_{h,i}, \bA^{0}_{h,i}, \bp_{h,i}).        
    \end{equation}
\end{proposition}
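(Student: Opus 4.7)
My plan is to proceed in two stages: first show that the triple $(0, \bA^0_{h,i}, 0)$ already obtained from \eqref{A0h_sys} (whose $\sigma$ and $\bp$ components were shown to vanish in the proof of Theorem~\ref{th:H1h_basis}) is also admissible for \eqref{A0h_sys_psi}, and then prove that \eqref{A0h_sys_psi} admits at most one solution. Together these will yield the desired identification.

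For the existence stage, the first two equations of \eqref{A0h_sys_psi} reduce exactly to those of \eqref{A0h_sys} under $\sigma = \bp = 0$, so the only new condition to verify is the orthogonality $\sprod{\bA_{h,i}^0}{\bgrad \psi_{h,j}} = 0$ for $1 \le j \le \beta_2$. I would first observe that since each $\psi_{h,j}$ takes a constant value on each connected component of $\partial \Omega$, its gradient has vanishing tangential trace, so $\bgrad \psi_{h,j} \in \tilde V^1_{h,0}$, and being curl-free it belongs to $\tilde \frZ^1_{h,0}$. The desired orthogonality then follows from the identity $\bA_{h,i}^0 \in (\tilde \frZ^1_{h,0})^\perp$ already established in \eqref{A0h_perpZ}.

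For uniqueness, I consider a triple $(\sigma, \bA^0, \bp)$ solving the homogeneous version of \eqref{A0h_sys_psi}. The key idea is to test the second equation successively with $\bv = \bgrad \sigma$ and with $\bv = \bp$, both of which lie in $\tilde V^1_{h,0}$ and are annihilated by $\bcurl$. This yields the two identities $\|\bgrad \sigma\|^2 + \sprod{\bp}{\bgrad \sigma} = 0$ and $\|\bp\|^2 + \sprod{\bp}{\bgrad \sigma} = 0$, and summing them gives $\|\bgrad \sigma + \bp\|^2 = 0$. Since $\bgrad \sigma \in \bgrad \tilde V^0_{h,0}$ and $\bp \in \bgrad \tilde V^0_{h,\psi}$ have trivial intersection (by a direct boundary-value argument using $\psi_{h,i}|_{S_j} = \delta_{i,j}$), this forces $\bgrad \sigma = \bp = 0$, hence $\sigma = 0$ by injectivity of $\bgrad$ on $\tilde V^0_{h,0}$.

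It then remains to show $\bA^0 = 0$. I would combine the first and third homogeneous equations to get $\bA^0 \perp \bgrad \tilde V^0_{h,S}$, identify this last space with $\tilde \frZ^1_{h,0}$ via the discrete Hodge decomposition $\tilde \frZ^1_{h,0} = \bgrad \tilde V^0_{h,0} \poplus \tilde \frH^1_{h,0}$ together with the inclusion \eqref{frH2h_sub_grad_c} and the dimension equality \eqref{as:dimHh}, and then invoke the second equation with $\bv = \bA^0$ to obtain $\bcurl \bA^0 = 0$. This closes the argument, as $\bA^0$ then lies in $\tilde \frZ^1_{h,0} \cap (\tilde \frZ^1_{h,0})^\perp = \{0\}$. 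I expect the main obstacle to be precisely this identification $\bgrad \tilde V^0_{h,S} = \tilde \frZ^1_{h,0}$: the discrete Hodge decomposition expresses $\tilde \frZ^1_{h,0}$ in terms of the harmonic space $\tilde \frH^1_{h,0}$ rather than the explicit algebraic complement $\bgrad \tilde V^0_{h,\psi}$, so both an inclusion and a careful dimension count are needed; the remaining steps are routine testing.
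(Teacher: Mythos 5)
Your proof is correct and follows essentially the same route as the paper's: the same key ingredients appear in both, namely the fact that $\bgrad \tilde V^0_{h,S} \subseteq \tilde \frZ^1_{h,0}$, the direct-sum/trivial-intersection property from Proposition~\ref{prop:psi_h}, the inclusion \eqref{frH2h_sub_grad_c} giving $\tilde \frZ^1_{h,0} \subseteq \bgrad \tilde V^0_{h,S}$, and the characterization of $\bA^0_{h,i}$ by \eqref{A0h_perpZ}--\eqref{A0h_cc}. The only difference is organizational: you split the argument into existence (checking that the known solution of \eqref{A0h_sys} solves \eqref{A0h_sys_psi}) plus uniqueness for the homogeneous problem, whereas the paper derives the same characterizing relations directly from the inhomogeneous system; this is a standard repackaging of the identical computation.
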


\begin{proof}
    We will show that any solution of \eqref{A0h_sys_psi} satisfies 
    $\sigma^\psi_{h,i} = 0$, $\bp^\psi_{h,i} = 0$
    and 
    \begin{equation} \label{props_A0h_psi}
        \bA^{0,\psi}_{h,i} \in (\tilde \frZ^1_{h,0})^\perp,
        \qquad
        \sprod{\bcurl(\bA_{h,i}^{0,\psi} + \bA_{h,i}^b)}{ \bcurl\bv}
        = 0, 
        \quad \forall\ \bv\in \tilde V^1_{h,0}
    \end{equation}
    where we remind that 
    $\tilde \frZ^1_{h,0} = \bgrad \tilde V^0_{h,0} \oplus \tilde \frH^1_{h,0}$
    is the kernel of the curl in $\tilde V^1_{h,0}$,
    see \eqref{tilde_Hh}--\eqref{tilde_Zh}.
    Since the relations \eqref{props_A0h_psi} characterize a unique field in $\tilde V^1_{h,0}$, this will show the existence and uniqueness of a solution. The fact that it coincides with the solution of \eqref{A0h_sys} will follow from the fact that the latter satisfies the same relations, 
    as seen in the proof of Theorem~\ref{th:H1h_basis} and Remark~\ref{rem:A0h_perpZ}.

    So, let $\psi_h \in \tilde V^0_{h,\psi}$ be such that 
    $\bp^\psi_{h,i} = \bgrad \psi_h$, and after observing that
    $\bgrad \psi_h \in \tilde V^1_{h,0}$,
    take $\bv = \bgrad \sigma^\psi_{h,i} + \bp^\psi_{h,i} = \bgrad(\sigma^\psi_{h,i} + \psi_h)$
    in \eqref{A0h_sys_psi}: since $\bv$ is curl-free,
    this yields $\bgrad(\sigma^\psi_{h,i} + \psi_h) = 0$,
    hence $\sigma^\psi_{h,i} + \psi_h = 0$ 
    given that this function must vanish on $S_0$.
    It then follows from the direct sum \eqref{tV0hc_dec}
    that $\sigma^\psi_{h,i} = \psi_h = 0$.
    In particular $\bp^\psi_{h,i} = 0$ and the second relation in 
    \eqref{props_A0h_psi} readily follows. To show the first one, 
    observe that the first and last equation from \eqref{A0h_sys_psi}
    now imply 
    \begin{equation*}
        \bA^{0,\psi}_{h,i} \in (\bgrad \tilde V^0_{h,0})^\perp \cap 
        (\bgrad \tilde V^0_{h,\psi})^\perp
        = (\bgrad \tilde V^0_{h,S})^\perp 
        \subseteq (\bgrad \tilde V^0_{h,0} \oplus \tilde \frH^1_{h,0})^\perp
        = (\tilde \frZ^1_{h,0})^\perp
    \end{equation*}
    where the inclusion follows from the fact that 
    $\tilde \frH^1_{h,0}$ (and $\bgrad \tilde V^0_{h,0}$, obviously) 
    is included in $\bgrad \tilde V^0_{h,S}$, see \eqref{frH2h_sub_grad_c}. 
    This shows the claimed equalities, and ends the proof. 
\end{proof}

\section*{Acknowledgments}

The authors would like to thank Douglas Arnold and Martin Costabel for fruitful exchanges on the de Rham theorem and its extension to Hilbert spaces. Inspiring discussions with several colleagues at IPP and outside are also acknowledged, in particular with Phil Morrison, Dirk Pauly, Patrick Ciarlet and Tobias Blickhan.

\bibliographystyle{plain} 
\bibliography{harpo}

\end{document}